\newcommand{\bel}[1]{\begin{equation}\label{#1}}
\newcommand{\be}{\begin{equation}}
\newcommand{\ba}{\begin{eqnarray}}
\newcommand{\ea}{\end{eqnarray}}
\newcommand{\qe}{\end{equation}}
\newcommand{\Hmm}[1]{\leavevmode{\marginpar{\tiny%
$\hbox to 0mm{\hspace*{-0.5mm}$\leftarrow$\hss}%
\vcenter{\vrule depth 0.1mm height 0.1mm width \the\marginparwidth}%
\hbox to
0mm{\hss$\rightarrow$\hspace*{-0.5mm}}$\\\relax\raggedright #1}}}
\theoremstyle{theorem}
\newtheorem{thm}{Theorem}[section]
\theoremstyle{conjecture}
\newtheorem{conj}[thm]{Conjecture}
\newtheorem{prop}[thm]{Proposition}
\theoremstyle{example}
\theoremstyle{corollary}
\newtheorem{coro}[thm]{Corollary}
\theoremstyle{lemma}
\newtheorem{lemma}[thm]{Lemma}
\theoremstyle{definition}
\newtheorem{defi}[thm]{Definition}
\theoremstyle{proof}
\theoremstyle{remark}
\theoremstyle{Acknowledgements}
\newtheorem*{ac}{Acknowledgements}
\begin{document}

\title{Some non-vanishing results on log canonical pairs of dimension 4}
\author{Fanjun Meng}
\address{Department of Mathematics, Northwestern University, 2033 Sheridan Road, Evanston, IL 60208, USA}
\email{fanjunmeng2022@u.northwestern.edu}

\begin{abstract}
Let $(X,\Delta)$ be a log canonical pair over $\mathbb{C}$ with $X$ a normal projective variety, $\Delta$ an effective $\mathbb{Q}$-divisor, and $K_X+\Delta$ nef. We give a non-vanishing criterion for $K_X+\Delta$ in dimension $n$ with $X$ uniruled, assuming various conjectures of LMMP in dimensions (up to) $n-1$ or $n$, and a semi-ampleness criterion in the irregular case. In particular, we obtain that if $X$ is a uniruled $4$-fold, then $\kappa(K_X+\Delta)\geq 0$ and if $X$ is a $4$-fold with $q(X)>0$, then $K_X+\Delta$ is semi-ample.
\end{abstract}

\maketitle

\section{Introduction}
In birational geometry, a minimal model is expected to have the nice property that its canonical divisor is semi-ample. This is called the abundance conjecture. There is also a log version of the minimal model program, and the abundance conjecture can be generalized to log canonical pairs. All the varieties we consider below are projective varieties over $\mathbb{C}$.

\begin{conj}[Log abundance conjecture]
Let $(X,\Delta)$ be a log canonical pair. If $K_X+\Delta$ is nef, then $K_X+\Delta$ is semi-ample.
\end{conj}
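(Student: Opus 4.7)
The statement is the full Log Abundance Conjecture, which is a major open problem, so I can only outline the standard two-stage strategy that the field has settled on. First, by taking a dlt modification $f\colon (Y,\Delta_Y)\to (X,\Delta)$, which produces a crepant dlt pair with $K_Y+\Delta_Y$ nef, I would reduce to the case where $(X,\Delta)$ is divisorially log terminal, since semi-ampleness of $K_X+\Delta$ is equivalent to semi-ampleness of $K_Y+\Delta_Y$.

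Stage one is non-vanishing, the assertion that $\kappa(X,K_X+\Delta)\geq 0$. The natural dichotomy is whether $X$ is uniruled. When $X$ is not uniruled, $K_X$ is pseudo-effective by the theorem of Boucksom--Demailly--Paun--Peternell, and one seeks to exploit the effectivity of $\Delta$ together with positivity/extension arguments to produce a section. When $X$ is uniruled, I would run a $K_X$-MMP to reach a Mori fibre space $X'\to Z$; since $K_{X'}+\Delta'$ remains nef while $K_{X'}$ is relatively anti-ample, $\Delta'$ must meet the general fibre, so the canonical bundle formula for lc-trivial fibrations should push the problem down to $(Z,\Delta_Z)$, where induction on dimension applies.

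Stage two upgrades non-vanishing to semi-ampleness. Considering the Iitaka fibration of $K_X+\Delta$ and applying the canonical bundle formula, one reduces to the case $\kappa(K_X+\Delta)=0$. There, the aim is the stronger assertion $K_X+\Delta\equiv 0$, i.e.\ the equality $\kappa=\nu$; once this is available, the log base-point-free theorem of Ambro--Fujino delivers semi-ampleness.

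The principal obstacle is stage one in dimension $\geq 4$: non-vanishing is open in general, and even granting it, the equality $\kappa=\nu$ is unknown. The paper under consideration attacks precisely the uniruled case in dimension $4$, conditional on lower-dimensional conjectures of the LMMP, which is consistent with the expectation that the uniruled case should reduce, via MMP and the canonical bundle formula, to non-vanishing on lower-dimensional bases. Pushing this past dimension $4$, or handling the non-uniruled case, will require genuinely new input.
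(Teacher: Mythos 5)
You have correctly identified that the statement is the Log Abundance Conjecture itself: in the paper it appears only as a conjecture, with no proof offered (nor possible at present), and the paper's actual theorems are conditional partial cases. So there is nothing to fault in your refusal to produce a proof; the only honest assessment is that neither you nor the paper proves the statement, and your submission should be read as a strategy survey rather than an argument.

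As a survey, your outline tracks the paper's methods for its special cases reasonably well, with a few divergences worth noting. The paper's reduction goes further than dlt: after the dlt blow-up it uses the KMM94-style lemmas (LMMP on $K_X+\Delta$ with scaling of $F=\llcorner\Delta\lrcorner$, dlt adjunction to a log canonical centre, and lower-dimensional log abundance via Fujino--Gongyo) to reduce all the way to klt, and then takes a terminalization. In the uniruled case the paper does run a $K_X$-MMP with scaling of $\Delta$ to reach a Mori fibre space, but it does not invoke the canonical bundle formula for lc-trivial fibrations as you propose; instead it observes that $K_Y+\lambda\Delta_Y$ is nef and trivial on the fibres, hence not big over the base, and applies the Lazi\'c--Peternell criterion (Proposition 3.2 of their paper), which assumes good minimal models in dimensions up to $n-1$, to get semi-ampleness directly. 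Your canonical-bundle-formula route is a legitimate alternative but would require controlling the moduli part on the base, which the paper avoids. Your stage two ($\kappa=\nu$ plus the Ambro--Fujino base-point-free theorem) corresponds to what the paper does only in the irregular case, via Lemma \ref{EA} and Fujino's addition formula over a base of general type, not in the uniruled non-vanishing result, where semi-ampleness comes for free from Proposition \ref{AFS1}. Finally, your remarks on the non-uniruled case (BDPP, extension techniques) are not touched by the paper at all, which is consistent with its stated scope.
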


There is another famous conjecture which is related to log abundance.

\begin{conj}[Good minimal model conjecture]
Let $(X,\Delta)$ be a log canonical pair. If $K_X+\Delta$ is pseudo-effective, then $(X,\Delta)$ has a good minimal model.
\end{conj}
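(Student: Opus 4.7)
The plan is to reduce the good minimal model conjecture to a combination of the non-vanishing conjecture, termination of a suitable MMP with scaling, and the log abundance conjecture, each of which is already in the background of this paper's set-up. First I would pass from $(X,\Delta)$ to a $\mathbb{Q}$-factorial dlt modification $(Y,\Delta_Y)$ via a log resolution; since the existence of a good minimal model is a birational property among dlt pairs, we may work with $(Y,\Delta_Y)$ and assume $K_Y+\Delta_Y$ is still pseudo-effective.

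Next, I would invoke the non-vanishing conjecture to replace pseudo-effectivity with effectivity: $K_Y+\Delta_Y\sim_{\mathbb{Q}} D$ for some effective $\mathbb{Q}$-divisor $D$. With this in hand, one runs a $(K_Y+\Delta_Y)$-MMP with scaling of a suitable ample divisor, and using the special termination/termination-with-scaling results (available in the dlt setting in low dimensions, and expected in general), the program ends at a minimal model $(Y',\Delta_{Y'})$ with $K_{Y'}+\Delta_{Y'}$ nef. At this point the log abundance conjecture applied to $(Y',\Delta_{Y'})$ yields that $K_{Y'}+\Delta_{Y'}$ is semi-ample, exhibiting $(Y',\Delta_{Y'})$ as a good minimal model of $(X,\Delta)$.

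The main obstacle is of course that each of the three ingredients above is itself a major open conjecture. In the setting relevant to this paper, one would instead argue in fixed low dimension: for $\dim X\leq 3$ all three ingredients are theorems (by work of Shokurov, Kawamata, Keel--Matsuki--McKernan, and others), so the conjecture is known. For $\dim X=4$ the strategy becomes conditional, reducing the problem to non-vanishing in dimension $4$ (which the abstract indicates will be established for uniruled $X$) together with termination and abundance inputs in dimensions $\leq 3$ transported via an MMP. I would expect the hardest step to be the termination of the MMP with scaling in the strictly log canonical case in dimension $n\geq 4$, since without non-klt termination one cannot guarantee that the MMP with scaling reaches a nef model, and it is precisely the log canonical singularities (as opposed to klt or dlt after a dlt modification) that create the technical difficulty.
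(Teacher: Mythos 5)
The statement you are trying to prove is not a theorem of the paper at all: it is the Good Minimal Model Conjecture, stated in the introduction purely as background and motivation. The paper offers no proof of it, and indeed it is a major open problem in dimension $\geq 4$; the paper only establishes special cases, namely non-vanishing for log canonical pairs on uniruled $4$-folds and, via Corollary \ref{Lq4} together with Moraga's termination theorem (Theorem \ref{Ter}), the existence of good minimal models for log canonical $4$-fold pairs with $q(X)>0$. So there is no ``paper proof'' to compare yours against, and no complete proof is currently possible.

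Your proposal, as you yourself concede, is not a proof but a reduction of the conjecture to three other statements: non-vanishing, termination of a suitable MMP (with scaling) for dlt/lc pairs, and log abundance, all in the relevant dimension. Each of these is open in dimension $\geq 4$ (non-vanishing and abundance in dimension $4$ are precisely what this paper attacks under extra hypotheses such as uniruledness or positive irregularity), and collectively they are at least as hard as the conjecture itself, so the argument is unconditional only in dimension $\leq 3$, where the result was already known. That is the genuine gap: the scheme ``non-vanishing $+$ termination with scaling $+$ abundance $\Rightarrow$ good minimal models'' is the standard expected route (and is close in spirit to how the paper uses its hypotheses in Theorem \ref{Uni}, Theorem \ref{Lq}, and the final corollary), but invoking conjectures as ingredients does not yield a proof of a conjecture. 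A smaller but real technical point: even granting those ingredients, you would still need to justify that a good minimal model of the dlt modification $(Y,\Delta_Y)$ descends to one of the original lc pair $(X,\Delta)$, and that the lc flips needed to run the MMP exist; these are known results (Hacon--Xu, Birkar), but they are not automatic from ``birational invariance'' as your sketch suggests. If you want to contribute something in the spirit of the paper, the realistic target is a further special case: for instance, combining Theorem \ref{Uni1} (non-vanishing for uniruled $4$-folds) with Theorem \ref{Ter} still does not give good minimal models without abundance, which is exactly why the paper's unconditional good-minimal-model statement in dimension $4$ is restricted to the irregular case where semi-ampleness is actually proved.
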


The log abundance conjecture is only known to be true in dimensions up to $3$. In dimension $1$, it is trivial. In dimension $2$, it was proven by Kawamata in \cite{Kaw79} and Fujita in \cite{Fuj84}. In dimension $3$, it was proven by Keel, Matsuki and $\rm M^{c}$Kernan in \cite{KMM94}.

To prove the log abundance conjecture, the first natural step is to prove the following non-vanishing conjecture. It is implied by log abundance and the log minimal model program.

\begin{conj}[Non-vanishing conjecture]
Let $(X,\Delta)$ be a log canonical pair. If $K_X+\Delta$ is pseudo-effective, then $\kappa(K_X+\Delta)\geq0$.
\end{conj}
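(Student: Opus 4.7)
The plan is to attack the problem by splitting on the birational geometry of $X$. Up to passing to a $\mathbb{Q}$-factorial dlt modification we may assume $(X,\Delta)$ is dlt, and by BDPP one knows that $X$ is uniruled if and only if $K_X$ is not pseudo-effective. Accordingly, I would divide the argument into the uniruled and non-uniruled cases, with the goal of reducing each to a non-vanishing statement in strictly smaller dimension.

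In the non-uniruled case, $K_X$ itself is pseudo-effective, so $K_X$ and $K_X+\Delta$ both carry positivity. The natural route is to invoke (a suitable form of) the good minimal model conjecture for $(X,0)$: if $(X,0)$ admits a good minimal model then $\kappa(K_X)\geq 0$, and since $\Delta\geq 0$ this immediately gives $\kappa(K_X+\Delta)\geq 0$. When $(X,\Delta)$ is klt one may hope to reduce the boundary to $0$ and work directly with $K_X$; the lc case requires more care, typically passing through dlt models and perturbation arguments.

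If $X$ is uniruled, I would take a resolution of the MRC fibration $\pi\colon X\dashrightarrow Y$ with $Y$ non-uniruled and $\dim Y<\dim X$. After a relative log MMP over $Y$, one would like to descend $K_X+\Delta$ using Kawamata's canonical bundle formula, writing $K_X+\Delta\sim_{\mathbb{Q}}\pi^{*}(K_Y+B_Y+M_Y)$ for an induced generalized lc pair on $Y$ with boundary $B_Y$ and moduli part $M_Y$. Pseudo-effectivity transfers to $K_Y+B_Y+M_Y$; one then applies non-vanishing inductively on $Y$ and pulls sections back to $X$.

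The principal obstacle is that nearly every step above is itself a major open problem: the canonical bundle formula in the lc (generalized) setting presupposes good behaviour (effectivity/semi-ampleness) of the moduli divisor $M_Y$, the descent to $Y$ requires the existence of good minimal models and termination in dimensions up to $\dim X-1$, and the non-uniruled branch merely reduces one instance of non-vanishing to another, ultimately to non-vanishing for $K_X$ alone. This is precisely why the paper establishes the result conditional on stated LMMP conjectures in dimensions up to $n-1$ or $n$, and unconditionally only in dimension $4$ in the uniruled case — a full unconditional proof would require breakthroughs well beyond the scope of current techniques.
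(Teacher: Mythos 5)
The statement you are addressing is stated in the paper as a \emph{conjecture}, and the paper does not prove it; it only proves conditional and special cases (Theorem \ref{Uni1} under good minimal models in dimensions up to $n-1$, log abundance in dimension $n-1$, and termination with scaling in dimension $n$, with $X$ uniruled; and unconditionally only for uniruled $4$-folds in Corollary \ref{p}). Your proposal is likewise not a proof: as your own final paragraph concedes, each essential step --- the generalized canonical bundle formula with control of the moduli part $M_Y$, descent along the MRC fibration after a relative MMP, good minimal models and termination in dimension $\dim X-1$, and non-vanishing for $K_X$ in the non-uniruled branch --- is itself a major open problem. So what you have written is a reduction of one open conjecture to several others; there is a genuine gap in the sense that no complete argument is given, though this is unavoidable for the full conjecture, and you correctly diagnose the situation.

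It is still worth comparing your uniruled branch with the paper's actual conditional argument, since they differ. The paper avoids the MRC fibration and the canonical bundle formula entirely. It first reduces from log canonical to klt singularities via a dlt blow-up, LMMP with scaling, dlt adjunction and log abundance in dimension $n-1$ (Lemmas \ref{1}--\ref{3}, following Keel--Matsuki--M$^{\rm c}$Kernan), then takes a terminalization, and runs a $K_X$-MMP with scaling of $\Delta$. Because $X$ is uniruled and terminal, $K_X$ is not pseudo-effective, so this MMP must end at a Mori fibre space $f\colon Y\to Z$ on which $K_Y+\lambda\Delta_Y$ is nef and pulled back from $Z$, hence not big over $Z$; Proposition \ref{AFS1} of Lazi\'c--Peternell (conditional on good minimal models in dimensions up to $n-1$) then gives semi-ampleness, hence $\kappa\geq 0$. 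This route trades your canonical-bundle-formula descent (which requires unproven positivity of the moduli divisor) for the scaling MMP plus a relative non-bigness criterion, which is exactly what makes the $4$-dimensional uniruled case unconditional. Your non-uniruled branch, by contrast, has no counterpart in the paper: there the paper proves nothing beyond observing (in the corollary after Theorem \ref{Uni}) that non-vanishing in dimension $4$ reduces to terminal varieties with $K_X$ nef.
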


In this paper, we prove the following non-vanishing result for $(X,\Delta)$ when $X$ is uniruled. We use some reduction techniques from \cite{KMM94} to improve singularities. For the definition of the termination with scaling for pairs, we follow the definitions in \cite{Bir10}, cf. Definition \ref{LMMPS} too.

\begin{thm}[cf. Theorem \ref{Uni}]\label{Uni1}
Assume the existence of good minimal models for klt pairs in dimensions up to $n-1$, the log abundance conjecture in dimension $n-1$ and the termination with scaling for $\mathbb{Q}$-factorial dlt pairs in dimension $n$. Let $(X,\Delta)$ be a log canonical pair of dimension $n$ with $X$ uniruled. If $K_X+\Delta$ is nef, then $\kappa(K_X+\Delta)\geq0$.
\end{thm}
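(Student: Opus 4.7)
The plan is to use the uniruledness of $X$ to produce a Mori fibre space via an MMP in dimension $n$, and then to descend $K_X+\Delta$ to a lower-dimensional base via the canonical bundle formula, where the assumed log abundance in dimension $n-1$ applies. First I would replace $(X,\Delta)$ by a $\mathbb{Q}$-factorial dlt modification $\pi:(Y,\Delta_Y)\to(X,\Delta)$ with $K_Y+\Delta_Y=\pi^*(K_X+\Delta)$, so that it suffices to prove $\kappa(K_Y+\Delta_Y)\geq 0$; uniruledness of $Y$ and nefness of $K_Y+\Delta_Y$ are preserved.

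Since $Y$ is uniruled, $K_Y$ is not pseudo-effective, and for a sufficiently small rational $\varepsilon>0$ the divisor $K_Y+\varepsilon\Delta_Y$ remains non-pseudo-effective while $(Y,\varepsilon\Delta_Y)$ is klt. As $K_Y+\varepsilon\Delta_Y+(1-\varepsilon)\Delta_Y=K_Y+\Delta_Y$ is nef, I run the $(K_Y+\varepsilon\Delta_Y)$-MMP with scaling of $(1-\varepsilon)\Delta_Y$. The assumed termination with scaling in dimension $n$ makes it terminate, and since $K_Y+\varepsilon\Delta_Y$ is not pseudo-effective the MMP must end with a Mori fibre space $f:Y'\to Z$ of relative Picard number $1$, where $\dim Z<n$; throughout, $K_{Y'}+\Delta_{Y'}$ remains nef.

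Let $t_0\in[0,1]$ denote the final scaling constant. If $t_0=1$, then $K_{Y'}+\Delta_{Y'}$ is numerically trivial on the fibres of $f$, so after passing to a higher birational model of $Z$ the canonical bundle formula gives
\[
K_{Y'}+\Delta_{Y'}\sim_{\mathbb{Q}} f^*(K_Z+B_Z+M_Z)
\]
for a log canonical pair $(Z,B_Z+M_Z)$ with $\dim Z\leq n-1$; the assumed log abundance in dimension $n-1$ combined with good minimal models for klt pairs in dimensions up to $n-1$ yields $\kappa(K_Z+B_Z+M_Z)\geq 0$, hence $\kappa(K_{Y'}+\Delta_{Y'})\geq 0$. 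If $t_0<1$, then $(K_{Y'}+\Delta_{Y'})|_F$ is ample on the general fibre $F$ of $f$; since $(F,\Delta_{Y'}|_F)$ is of log Fano type, semi-ampleness on $F$ is standard, and its sections propagate via $f$ to give $\kappa(K_{Y'}+\Delta_{Y'})\geq 0$. Either way, $\kappa(K_X+\Delta)\geq 0$.

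The main obstacle is the descent step when $t_0=1$: converting the output of the canonical bundle formula, naturally a generalized lc pair $(Z,B_Z+M_Z)$ with nef $b$-divisor moduli part $M_Z$, into an honest log canonical pair in dimension $n-1$ to which the assumed log abundance applies, and moreover verifying the klt hypothesis required to invoke the assumed existence of good minimal models. A secondary technical point is in the MMP: preserving the dlt property through the scaling steps and matching the scaling divisor $(1-\varepsilon)\Delta_Y$, which is not ample, with the precise form of the termination-with-scaling assumption; this should be surmountable by perturbing by a small ample or by appealing to Birkar's more flexible formulations.
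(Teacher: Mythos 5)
There is a genuine gap at the very first substantive step. You pass only to a dlt modification $(Y,\Delta_Y)$ and then assert ``since $Y$ is uniruled, $K_Y$ is not pseudo-effective.'' This implication is only valid when $Y$ has terminal (or canonical) singularities: one deduces it from BDPP on a resolution $\pi\colon\widetilde Y\to Y$ via $K_{\widetilde Y}=\pi^*K_Y+E$ with $E$ effective, and effectivity of $E$ fails once discrepancies are negative. A dlt (or klt) uniruled variety can perfectly well have $K_Y$ pseudo-effective -- for instance there are rational surfaces with quotient (klt) singularities whose canonical class is ample. Consequently $K_Y+\varepsilon\Delta_Y$ need not be non-pseudo-effective either, your scaled MMP need not end with a Mori fibre space, and the argument has no fallback in that case. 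This is precisely why the paper does \emph{not} go straight from the dlt model to an MMP: it first reduces to klt singularities via the KMM94-type Lemmas \ref{1}, \ref{2} and \ref{3} -- and this reduction, via dlt adjunction to the components of $\lfloor\Delta\rfloor$ and Mori fibre spaces dominated by such components, is where the assumed log abundance in dimension $n-1$ actually enters -- and only then takes a terminalization (Theorem \ref{TerM}), after which ``uniruled $\Rightarrow K_X$ not pseudo-effective'' is legitimate. Your $\varepsilon$-perturbation of the boundary does not substitute for this reduction.

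The analysis at the Mori fibre space also tracks the wrong divisor. Once the scaling coefficient drops below $1$, the contracted rays are strictly positive for the transform of $K+\Delta$, so $K_{Y'}+\Delta_{Y'}$ need not remain nef; worse, the steps are not $(K+\Delta)$-crepant, so $\kappa(K_{Y'}+\Delta_{Y'})\geq 0$ does not transfer back to $X$ (sections push forward, they do not pull back, so the inequality of Kodaira dimensions goes the wrong way for you). The divisor that stays nef, whose Kodaira dimension is bounded above by $\kappa(K_X+\Delta)$ along the run, and which is pulled back from $Z$ at the Mori fibre space, is $K_{Y'}+\mu\Delta_{Y'}$ with $\mu$ the current scaling threshold; with this divisor the paper concludes directly from Proposition \ref{AFS1} of Lazi\'c--Peternell (using exactly the assumed good minimal models in dimensions up to $n-1$) when $\dim Z>0$, and trivially when $\dim Z=0$. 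This uniform argument removes both obstacles you flag: no canonical bundle formula or generalized pairs are needed, and in your $t_0<1$ case the claim that ``sections propagate via $f$'' from fibrewise ampleness is not a valid argument in any event. (Your secondary worry about the scaling divisor not being ample is harmless: the termination-with-scaling hypothesis, as in Definition \ref{LMMPS} and Theorem \ref{TerS}, only requires $(Y,\Delta_Y)$ dlt with the total divisor nef, which is also how the paper itself uses it.)
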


The first two assumptions are satisfied in dimensions up to 3 by an accumulation of many results. Birkar proved the termination with scaling for $\mathbb{Q}$-factorial dlt pairs in dimension $4$ in \cite[Lemma 3.8]{Bir10}. Recently, Moraga proved the termination of $(K_X+\Delta)$-flips for every log canonical pair $(X,\Delta)$, assuming that the $\mathbb{Q}$-Cartier $\mathbb{Q}$-divisor $K_X+\Delta$ is pseudo-effective and $X$ is of dimension $4$ in \cite[Theorem 1]{Mor18}. Therefore we deduce the following:

\begin{coro}[cf. Corollary \ref{Uni4}]\label{p}
Let $(X,\Delta)$ be a log canonical pair such that $X$ is a uniruled $4$-fold. If $K_X+\Delta$ is pseudo-effective, then $\kappa(K_X+\Delta)\geq0$.
\end{coro}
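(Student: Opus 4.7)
The plan is to reduce the pseudo-effective hypothesis to the nef hypothesis by running a log MMP and then apply Theorem~\ref{Uni1} in dimension four, whose assumptions are all available there.

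Since $(X,\Delta)$ is only assumed lc (and not necessarily $\mathbb{Q}$-factorial or dlt), I would first pass to a $\mathbb{Q}$-factorial dlt modification $\pi\colon (X',\Delta')\to (X,\Delta)$. Such a modification exists in arbitrary dimension and satisfies $K_{X'}+\Delta'=\pi^{*}(K_X+\Delta)$; in particular $K_{X'}+\Delta'$ is pseudo-effective, $\kappa(K_{X'}+\Delta')=\kappa(K_X+\Delta)$, and $X'$ is again a uniruled $4$-fold by birational invariance of uniruledness in characteristic zero.

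Next I would run a $(K_{X'}+\Delta')$-MMP with scaling of an ample divisor. Since $(X',\Delta')$ is $\mathbb{Q}$-factorial dlt and $\dim X'=4$, termination with scaling is \cite[Lemma 3.8]{Bir10}; alternatively, one could run the MMP directly on the lc pair $(X,\Delta)$ and invoke Moraga's theorem \cite[Theorem 1]{Mor18} for termination. The output is a $\mathbb{Q}$-factorial dlt pair $(X'',\Delta'')$ with $K_{X''}+\Delta''$ nef, still a uniruled $4$-fold (every MMP step preserves uniruledness), and with $\kappa(K_{X''}+\Delta'')=\kappa(K_X+\Delta)$.

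Finally I apply Theorem~\ref{Uni1} with $n=4$. Its three hypotheses are satisfied: the existence of good minimal models for klt pairs in dimensions $\leq 3$ and log abundance in dimension $3$ are known (the latter is \cite{KMM94}), and termination with scaling for $\mathbb{Q}$-factorial dlt $4$-folds is \cite[Lemma 3.8]{Bir10}. This yields $\kappa(K_{X''}+\Delta'')\geq 0$, and hence $\kappa(K_X+\Delta)\geq 0$. The main obstacle in this plan is the MMP step itself, which relies on the recent existence of lc flips in dimension four together with Birkar's or Moraga's termination result; once the minimal model exists, the rest is a direct invocation of Theorem~\ref{Uni1}.
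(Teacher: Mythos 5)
Your overall strategy is the same as the paper's: use termination of the four-dimensional MMP for pseudo-effective pairs to reduce to the nef case, then quote Theorem~\ref{Uni1} (whose hypotheses hold for $n=4$). The detour through a $\mathbb{Q}$-factorial dlt modification and scaling of an ample divisor is a harmless variant of the paper's choice to run the LMMP directly on the lc pair using Moraga's termination theorem, so that part is fine.

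However, there is a genuine gap at the key step: you simply assert that ``the output is a $\mathbb{Q}$-factorial dlt pair $(X'',\Delta'')$ with $K_{X''}+\Delta''$ nef.'' An MMP (with or without scaling) has two possible outcomes, a log minimal model or a Mori fibre space, and termination alone does not tell you which one occurs; nothing in your argument rules out the fibre-type contraction. This exclusion is precisely the content of the paper's proof of Corollary~\ref{Uni4}: one first shows that pseudo-effectivity of the log canonical divisor is preserved under each divisorial contraction and flip (the paper does this by writing $K_{X_i}+\Delta_i\sim_{\mathbb{Q}}f^*(K_{X_{i+1}}+\Delta_{i+1})+aE$ with $a>0$, adding a small pullback of an ample divisor, and comparing Kodaira dimensions to exhibit $K_{X_{i+1}}+\Delta_{i+1}$ as a limit of big divisors), and then observes that if the MMP ended with a Mori fibre space $Y\to Z$, the divisor $K_Y+\Delta_Y$ would be pseudo-effective, hence pseudo-effective on a general fibre, while also anti-ample on that positive-dimensional fibre --- a contradiction. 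Only after this does one know the MMP ends with a minimal model, so that Theorem~\ref{Uni1} applies. You should add this argument (or an explicit reference for the standard fact that a pseudo-effective log canonical divisor admits no $(K+\Delta)$-negative fibre-type contraction after MMP steps); as written, the crucial case distinction is skipped. The preservation of $\kappa$ along the MMP steps, which you also use, is routine but is likewise spelled out in the paper.
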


Moving on to the next result, the irregularity $q(X)$ of a projective variety is defined as the irregularity of any resolution of $X$. It is well defined and invariant under birational equivalence. A projective variety is called irregular if $q(X)>0$.

Using the same reduction techniques as in the proof of Theorem \ref{Uni1}, it is not hard to deduce from \cite[Theorem 1.3]{BC15} and \cite[Proposition 3.2]{LP18} (cf. Theorem \ref{AFS2} and Proposition \ref{AFS1}) the following log abundance statement for irregular varieties.

\begin{thm}[cf. Theorem \ref{Lq}]\label{qq}
Assume the existence of good minimal models for klt pairs in dimensions up to $n-1$, the log abundance conjecture in dimension $n-1$ and the termination with scaling for $\mathbb{Q}$-factorial dlt pairs in dimension $n$. Let $(X,\Delta)$ be a log canonical pair of dimension $n$ with $q(X)>0$. If $K_X+\Delta$ is nef, then $K_X+\Delta$ is semi-ample.
\end{thm}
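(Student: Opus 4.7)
The plan is to emulate the proof of Theorem \ref{Uni1}: first reduce the log canonical pair $(X,\Delta)$ to a klt pair of dimension $n$ with the same nefness and irregularity, and then invoke the semi-ampleness criteria for klt irregular pairs recorded as Theorem \ref{AFS2} (Birkar--Chen) and Proposition \ref{AFS1} (Laz\'ic--Peternell).

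First I pass to a $\mathbb{Q}$-factorial dlt model by taking a dlt modification $f:(Y,\Gamma)\to(X,\Delta)$, so that $(Y,\Gamma)$ is $\mathbb{Q}$-factorial dlt and $K_Y+\Gamma=f^*(K_X+\Delta)$ is nef. Since $f$ is birational, $q(Y)=q(X)>0$, and semi-ampleness of $K_Y+\Gamma$ descends to that of $K_X+\Delta$; so we may assume $(X,\Delta)$ is itself $\mathbb{Q}$-factorial dlt. The main step is then to reduce to the case $\lfloor\Delta\rfloor=0$. Following the KMM94-type strategy used in Theorem \ref{Uni1}, for any component $S\subseteq\lfloor\Delta\rfloor$ adjunction gives $(K_X+\Delta)|_S=K_S+\Delta_S$ with $(S,\Delta_S)$ log canonical of dimension $n-1$, so the assumed log abundance in dimension $n-1$ makes $(K_X+\Delta)|_S$ semi-ample. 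Running an appropriate MMP with scaling designed to peel off components of the round-down (which terminates by the $\mathbb{Q}$-factorial dlt termination hypothesis in dimension $n$), while using the semi-ampleness on each $S$ to control the non-klt locus, produces a klt pair $(X',\Delta')$ of dimension $n$ with $K_{X'}+\Delta'$ nef; birational invariance of $q$ forces $q(X')>0$, and semi-ampleness can be tracked through the reduction, so it suffices to prove it on $(X',\Delta')$.

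On the klt pair $(X',\Delta')$ the hypotheses of Theorem \ref{AFS2} and Proposition \ref{AFS1} are met: $K_{X'}+\Delta'$ is nef, $q(X')>0$, and the conditions required by those statements (existence of good minimal models for klt pairs in dimensions up to $n-1$, together with log abundance in dimension $n-1$) are precisely part of our standing assumptions. Applying them yields that $K_{X'}+\Delta'$ is semi-ample, and hence so is $K_X+\Delta$.

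The principal obstacle is the dlt-to-klt reduction: nefness, irregularity, the partial semi-ampleness on the components of $\lfloor\Delta\rfloor$ coming from lower-dimensional log abundance, and termination of the scaling MMP all have to be combined so that the output feeds cleanly into the hypotheses of Theorem \ref{AFS2} and Proposition \ref{AFS1}. Once this reduction is set up exactly as in the proof of Theorem \ref{Uni1}, the remainder is either formal descent of semi-ampleness along birational maps or direct invocation of the two irregular semi-ampleness theorems.
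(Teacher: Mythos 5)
Your reduction to the klt (indeed terminal) case follows the paper's own route through the KMM94-type lemmas, and that part is acceptable as a sketch. The genuine gap is in the second half: you apply Theorem \ref{AFS2} and Proposition \ref{AFS1} as if they were criteria that apply to any klt pair with $K_{X'}+\Delta'$ nef and $q(X')>0$, but both results require a specified fibration. Theorem \ref{AFS2} needs an algebraic fibre space $X'\to Z$ onto a variety $Z$ of maximal Albanese dimension together with the hypothesis that $K_{X'}+\Delta'$ is \emph{big over} $Z$, while Proposition \ref{AFS1} needs a fibration to a positive-dimensional base over which $K_{X'}+\Delta'$ is \emph{not big}. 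Nothing in your proposal produces such a fibration or explains which of the two mutually exclusive bigness hypotheses holds; "$q(X')>0$" by itself verifies neither statement's hypotheses. This is exactly the content the paper supplies and you omit: it invokes the Kawamata-type dichotomy (Lemma \ref{Ab}) for the Albanese morphism. In the case where the Albanese morphism is an algebraic fibre space, it splits according to whether $K_X+\Delta$ is big over $\mathrm{Alb}(X)$, applying Theorem \ref{AFS2} in the big case and Proposition \ref{AFS1} (with $\dim\mathrm{Alb}(X)\geq 1$, using that terminal singularities are rational so $q(X)=h^1(X,\mathcal{O}_X)$) in the non-big case.

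Moreover, the dichotomy has a second branch that your argument cannot reach at all: the Albanese morphism may fail to be an algebraic fibre space, in which case one only gets an \'etale cover $\bar{X}\to X$ admitting a fibre space $\bar{X}\to W$ with $W$ positive-dimensional of general type. There neither Theorem \ref{AFS2} nor Proposition \ref{AFS1} applies directly (the base is of general type and one has no a priori control on bigness over $W$); the paper instead pulls the pair back to $\bar{X}$, uses the assumed good minimal models and log abundance in dimension $\leq n-1$ to get $\kappa=\nu$ on a general fibre, then uses Fujino's addition formula (Theorem \ref{AFS3}) via Lemma \ref{EA} to conclude $\kappa(K_{\bar{X}}+\bar{\Delta})=\nu(K_{\bar{X}}+\bar{\Delta})$, hence semi-ampleness, and finally descends along the \'etale cover. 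Without this case analysis on the Albanese morphism and the separate general-type-base argument, your proof does not go through; if you want to avoid Lemma \ref{Ab}, you would at least need to construct an explicit fibre space (e.g.\ the Stein factorization of the Albanese morphism onto its image, checking its base is of maximal Albanese dimension) and still carry out the big/not-big case division -- none of which is present in the proposal.
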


As before, the assumptions in Theorem \ref{qq} are true when $n=4$. Thus it implies the log abundance conjecture for $(X,\Delta)$ where $X$ is of dimension $4$ and $q(X)>0$ unconditionally. Fujino has proven the abundance conjecture and the good minimal model conjecture for irregular canonical $4$-folds (with no pairs) in \cite[Theorem 1.3 and Theorem 1.4]{Fuj10}. Our result generalizes this to log canonical pairs, and our method of proof is quite different from his. Hu studied algebraic fibre spaces over irregular varieties with Albanese fibre of general type in \cite{Hu16} and got some abundant results for log canonical pairs, which is related to our theorem above.

\begin{coro}[cf. Corollary \ref{Lq4}]\label{q}
Let $(X,\Delta)$ be a log canonical pair such that $X$ is of dimension $4$ and $q(X)>0$. If $K_X+\Delta$ is nef, then $K_X+\Delta$ is semi-ample.
\end{coro}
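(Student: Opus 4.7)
The plan is to derive Corollary \ref{q} as an immediate consequence of Theorem \ref{qq} with $n=4$, so the entire task reduces to verifying that the three hypotheses of that theorem are known unconditionally when $n=4$. First, the existence of good minimal models for klt pairs in dimension at most $3$ follows from the minimal model program and abundance for klt threefolds (with the latter due to Keel, Matsuki and McKernan in \cite{KMM94}), and log abundance in dimension $3$ is likewise the result of \cite{KMM94}. Second, termination with scaling for $\mathbb{Q}$-factorial dlt $4$-folds is \cite[Lemma 3.8]{Bir10}. These are exactly the three inputs required by Theorem \ref{qq}, and they are precisely what is available in the current state of the $4$-dimensional MMP.

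With these inputs in hand, Theorem \ref{qq} applies verbatim to the given pair $(X,\Delta)$ of dimension $4$ with $q(X)>0$ and $K_X+\Delta$ nef, yielding the semi-ampleness of $K_X+\Delta$. There is no substantive additional obstacle: all of the mathematical content has been packaged into Theorem \ref{qq}, and Corollary \ref{q} is simply its unconditional incarnation in dimension $4$. The only care required in writing up the proof is to cite \cite{KMM94} and \cite[Lemma 3.8]{Bir10} precisely, and to remark — as already signaled in the discussion preceding Theorem \ref{qq} — that the assumptions of that theorem have been calibrated so as to be verified in the $n=4$ case by these existing results.
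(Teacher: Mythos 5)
Your proposal is correct and matches the paper's own argument: Corollary \ref{q} is deduced by specializing Theorem \ref{qq} (Theorem \ref{Lq}) to $n=4$, checking its hypotheses via good minimal models and log abundance in dimension $3$ (\cite{KMM94} and related results) and termination with scaling for $\mathbb{Q}$-factorial dlt $4$-folds from \cite[Lemma 3.8]{Bir10} (Theorem \ref{TerS}). No further comment is needed.
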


By Moraga's result again, we know the termination of $(K_X+\Delta)$-flips for every log canonical pair $(X,\Delta)$, assuming that the $\mathbb{Q}$-Cartier $\mathbb{Q}$-divisor $K_X+\Delta$ is pseudo-effective and $X$ is of dimension $4$. We deduce the following:

\begin{coro}
Let $(X,\Delta)$ be a log canonical pair such that $X$ is of dimension $4$ and $q(X)>0$. If $K_X+\Delta$ is pseudo-effective, then $(X,\Delta)$ has a good minimal model.
\end{coro}

\begin{ac}
I would like to express my sincere gratitude to my advisor Mihnea Popa for telling me related problems, illuminating me when I meet difficulties and selflessly supporting me in every way. I would like to thank Mircea Musta$\c{t}$${\breve{a}}$, Charles Stibitz and Yuan Wang for answering my questions, helpful discussions and suggesting references.
\end{ac}

\section{preliminaries}

For definitions regarding singularities of pairs and the log minimal model program, we follow \cite{KM98}, except that we restrict to effective pairs. For properties about Albanese varieties, Albanese maps and Albanese morphisms for singular varieties, we refer to \cite{Lan83} and \cite{Wan16}.

We start by recalling the definition for abundant divisors.

\begin{defi}
Let $D$ be a nef $\mathbb{Q}$-Cartier $\mathbb{Q}$-divisor on a normal projective variety $X$. The numerical Kodaira dimension $\nu(D)$ of $D$ is defined to be $\rm max$\{$e;$ $D^e$ $\not\equiv0$\}. We say that $D$ is an abundant divisor if $\nu(D)=\kappa(D)$.
\end{defi}

Next we recall two results about the existence of dlt models and terminal models. The first one is due to Hacon; cf. \cite[Theorem 3.1]{KK10}.

\begin{thm}[Dlt blow-up, dlt model]\label{dlt}
Let $(X,\Delta)$ be a log canonical pair. We can find a new pair $(Y,\Delta_Y)$ and a birational morphism $f: Y\to X$ which satisfies the following:

1. $Y$ is $\mathbb{Q}$-factorial,

2. $(Y,\Delta_Y)$ is dlt,

3. $K_Y+\Delta_Y\sim_{\mathbb{Q}}f^*(K_X+\Delta)$.\\
We call $(Y,\Delta_Y)$ a dlt blow-up of $(X,\Delta)$.
\end{thm}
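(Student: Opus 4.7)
The plan is to construct $(Y,\Delta_Y)$ by starting from a log resolution of $(X,\Delta)$ and then running a carefully chosen MMP over $X$ that contracts exactly those exceptional divisors which are not log canonical places of the pair.

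First I would take a log resolution $g\colon W\to X$ of $(X,\Delta)$, with $g$-exceptional prime divisors $E_1,\ldots,E_k$. Writing
\[
K_W + g^{-1}_*\Delta = g^*(K_X+\Delta) + \sum_i a_i E_i,
\]
we have $a_i\geq -1$ for every $i$ by the log canonical assumption. Set $\Gamma = g^{-1}_*\Delta + \sum_i E_i$, so that $(W,\Gamma)$ is log smooth and in particular $\mathbb{Q}$-factorial dlt, and
\[
K_W + \Gamma = g^*(K_X+\Delta) + F, \qquad F = \sum_i (1+a_i) E_i,
\]
where $F$ is effective and $g$-exceptional. Now run a $(K_W+\Gamma)$-MMP over $X$ with scaling of a suitable ample divisor. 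Because $K_W+\Gamma \sim_{\mathbb{Q},X} F$ is effective and exceptional, the negativity lemma forces every contracted divisor to lie in the support of $F$, so only components with $a_i>-1$ get contracted, while each log canonical place (where $a_i=-1$) survives with coefficient $1$.

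Letting $f\colon Y\to X$ be the output and $\Delta_Y$ the pushforward of $\Gamma$, conditions $(1)$--$(3)$ follow: $Y$ is $\mathbb{Q}$-factorial and $(Y,\Delta_Y)$ is dlt since these properties are preserved along divisorial contractions and flips for an MMP with scaling of an ample divisor, and $K_Y+\Delta_Y\sim_{\mathbb{Q}}f^*(K_X+\Delta)$ because the pushforward of $F$ is zero. The main obstacle is the existence and termination of this MMP, given that $(X,\Delta)$ is only log canonical and the general lc MMP is still open. The crucial point, which is the technical heart of Hacon's argument, is that we are really running an MMP for an effective divisor $F$ that is exceptional over $X$; such MMPs are known to run and terminate as a consequence of BCHM, and this is what makes the whole construction go through.
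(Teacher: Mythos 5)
The paper gives no proof of Theorem \ref{dlt}: it is quoted as Hacon's result, cf.\ \cite[Theorem 3.1]{KK10}. What you have written is a reconstruction of the standard argument behind that citation, and the strategy is the correct one: log resolution $g\colon W\to X$, boundary $\Gamma=g^{-1}_*\Delta+\sum E_i$, the identity $K_W+\Gamma=g^*(K_X+\Delta)+F$ with $F=\sum(1+a_i)E_i\geq 0$ exceptional, and a $(K_W+\Gamma)$-MMP over $X$.

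Two steps are thinner than they need to be. First, property 3 does not follow from the observation that only components of $\mathrm{Supp}\,F$ can be contracted; you need that \emph{every} component of $F$ is contracted, i.e.\ that the pushforward $F_Y$ of $F$ to the output $Y$ is zero, and this is where the negativity lemma actually does its work: since $F\geq 0$, the relative MMP cannot end with a Mori fibre space over $X$, so at the end $K_Y+\Delta_Y\sim_{\mathbb{Q},X}F_Y$ is nef over $X$, and an effective $f$-exceptional divisor that is nef over $X$ vanishes by negativity. As written, ``the pushforward of $F$ is zero'' is asserted rather than derived, while the negativity lemma is invoked only for the (easier, and strictly speaking unnecessary for crepancy) claim about which divisors may be contracted. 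Second, the existence and termination of this MMP is not an off-the-shelf consequence of BCHM as you suggest: BCHM's termination with scaling is for klt pairs with big boundary, whereas $(W,\Gamma)$ is genuinely dlt, with coefficient-one components (the lc places of $(X,\Delta)$) that cannot be perturbed away without losing crepancy. Making this MMP run and terminate is precisely the content of Hacon's argument (via suitable perturbation to the klt, relatively big case over $X$ and special termination/finiteness-of-models statements), so a self-contained proof must either reproduce that argument or simply cite \cite[Theorem 3.1]{KK10} --- which is exactly what the paper does.
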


The next is due to Birkar, Cascini, Hacon and $\rm M^{c}$Kernan; cf. \cite[Corollary 1.4.3]{BCHM} and the statement right after that.

\begin{thm}[Terminalization, terminal model]\label{TerM}
Let $(X,\Delta)$ be a klt pair. We can find a new pair $(Y,\Delta_Y)$ and a birational morphism $f: Y\to X$ which satisfies the following:

1. $Y$ is $\mathbb{Q}$-factorial,

2. $(Y,\Delta_Y)$ is terminal,

3. $K_Y+\Delta_Y\sim_{\mathbb{Q}}f^*(K_X+\Delta)$.\\
We call $(Y,\Delta_Y)$ a terminalization of $(X,\Delta)$.
\end{thm}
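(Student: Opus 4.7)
The plan is to construct the terminalization by running the MMP on an auxiliary log smooth pair over $X$. Let $g\colon W\to X$ be a log resolution of $(X,\Delta)$, and let $E_1,\ldots,E_r$ denote the prime $g$-exceptional divisors with discrepancies $a_i := a(E_i,X,\Delta)$. Since $(X,\Delta)$ is klt, each $a_i>-1$. On $W$, define the effective boundary
\[
\Theta_W \;:=\; g_*^{-1}\Delta + \sum_{i}\max(0,-a_i)\,E_i,
\]
whose coefficients lie in $[0,1)$, so $(W,\Theta_W)$ is klt and log smooth. A direct computation of discrepancies gives
\[
K_W+\Theta_W \;\sim_{\mathbb{Q}}\; g^*(K_X+\Delta)+\sum_{a_i>0}a_i\,E_i,
\]
where the extra term is effective and $g$-exceptional. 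This already encodes both the birational data we want to retain (the $E_i$ with $a_i\leq 0$, which should survive onto the terminal model) and the data we want to discard (the $E_i$ with $a_i>0$, which should be contracted).

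Next, I run a $(K_W+\Theta_W)$-MMP over $X$ with scaling of an ample divisor. By the existence of minimal models for $\mathbb{Q}$-factorial klt pairs from \cite{BCHM}, this MMP terminates at a $\mathbb{Q}$-factorial model $f\colon Y\to X$ on which $K_Y+\Theta_Y$ is $f$-nef. Applying the negativity lemma to the $f$-nef, effective, $f$-exceptional divisor $K_Y+\Theta_Y-f^*(K_X+\Delta)$ forces it to vanish, hence $K_Y+\Theta_Y\sim_{\mathbb{Q}}f^*(K_X+\Delta)$, and the MMP has contracted exactly those $E_i$ with $a_i>0$, while the $E_i$ with $a_i\leq 0$ persist as components of $\Theta_Y$. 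To verify terminality of $(Y,\Theta_Y)$, crepancy gives $a(F,Y,\Theta_Y)=a(F,X,\Delta)$ for every divisor $F$ over $Y$; the $E_i$ contracted by the MMP satisfy $a_i>0$ by construction, and for any additional divisor exceptional over $Y$ one enlarges the initial log resolution to extract it and repeats the argument, so terminality follows in the limit.

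The principal obstacle is the termination of the MMP in the second step, which rests on the full strength of \cite[Corollary 1.4.3]{BCHM}; once that deep input is granted, the rest is essentially bookkeeping, combining the discrepancy calculation on $W$ with the negativity lemma to identify precisely which divisors get contracted and which survive.
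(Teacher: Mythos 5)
The paper offers no proof of this statement at all: it is quoted directly from \cite[Corollary 1.4.3]{BCHM} (and the remark following it). Your strategy --- log resolution, the boundary $\Theta_W=g_*^{-1}\Delta+\sum_i\max(0,-a_i)E_i$, a relative $(K_W+\Theta_W)$-MMP over $X$, and the negativity lemma --- is in fact the standard route to that corollary, and the discrepancy computation, the termination over the birational base $X$ (everything is big over $X$, so the relative statements of \cite{BCHM} apply), and the negativity argument killing the effective exceptional part are all correct.

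The genuine gap is in the terminality verification. With an \emph{arbitrary} log resolution $W$ the construction simply does not produce a terminal pair: take $X=\mathbb{P}^2$ and $\Delta=\tfrac{3}{5}D_1+\tfrac{7}{10}D_2$ with $D_1,D_2$ lines through a point $p$; this is log smooth and klt, so $W=X$ is a legitimate log resolution, the MMP over $X$ does nothing, and $Y=X$ --- yet the blow-up of $p$ has discrepancy $2-1-\tfrac{13}{10}<0$, so $(Y,\Theta_Y)$ is not terminal. Your proposed remedy, ``enlarge the initial log resolution to extract any additional divisor exceptional over $Y$ and repeat, so terminality follows in the limit,'' is not an argument: there are infinitely many divisors exceptional over $Y$, each enlargement changes the output $Y$, and there is no limiting object. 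The missing ingredient is the finiteness statement for klt pairs (cf.\ \cite[Proposition 2.36]{KM98}): a klt pair $(X,\Delta)$ admits only finitely many exceptional divisors $F$ with $a(F,X,\Delta)\le 0$. Choose $W$ once and for all so that all of these appear as divisors on $W$. Then any divisor exceptional over $Y$ is either one of the $E_i$ contracted by the MMP, or a divisor not appearing on $W$, and the latter automatically satisfies $a(F,X,\Delta)>0$. Moreover no $E_i$ with $a_i\le 0$ can be contracted: a divisor contracted in a step of the MMP has its discrepancy strictly increase, so it would satisfy $a(E_i,Y,\Theta_Y)>a(E_i,W,\Theta_W)=a_i$, contradicting the crepancy relation $a(E_i,Y,\Theta_Y)=a(E_i,X,\Delta)=a_i$; this also supplies the justification missing from your assertion that ``the $E_i$ with $a_i\le 0$ persist,'' which does not follow ``by construction'' alone. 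With these two additions (the finiteness input dictating the choice of $W$, and the monotonicity-plus-crepancy argument), your construction becomes a complete proof.
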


We also recall a result about dlt adjunction.

\begin{defi}
Let $(X,\Delta)$ be a log canonical pair, $E$ a prime divisor over $X$, and $a(E, X, \Delta)$ the discrepancy coefficient of $E$ with respect to $(X,\Delta)$. A closed subvariety $W$ of $X$ is called a log canonical center or lc center if there exists a resolution $f: Y \to X$ and a divisor $E$ on $Y$ such that $a(E, X, \Delta)=-1$ and $f(E)=W$.
\end{defi}

The following result can be found in \cite[Section 4.2]{Kol13}.

\begin{thm}[Dlt adjunction]\label{dltad}
Let $(X,\Delta)$ be a dlt pair, and $Z$ a log canonical center for the pair. Then:

1. There exists an effective $\mathbb{Q}$-divisor $\Delta_Z$ on $Z$ such that $(Z, \Delta_Z)$ is dlt.

2. $(K_X+\Delta)|_{Z}\sim_{\mathbb{Q}}K_Z+\Delta_Z$.
\end{thm}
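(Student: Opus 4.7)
The plan is to reduce the statement to repeated use of ordinary adjunction along divisorial boundary components, exploiting the structural description of lc centers in a dlt pair. Concretely, I would first show that any lc center $Z$ of the dlt pair $(X,\Delta)$ is an irreducible component of an intersection $D_{i_1}\cap\cdots\cap D_{i_k}$ of components of $\lfloor\Delta\rfloor$, and then run divisorial adjunction $k$ times, at each step verifying that the dlt property and the $\mathbb{Q}$-linear equivalence are preserved.

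First I would establish the stratification statement: in a dlt pair, the non-snc locus has codimension at least $2$ in $X$ and does not contain any lc center, so every lc center meets the snc open set and is there an intersection of components of $\lfloor\Delta\rfloor$. By irreducibility, $Z$ is globally an irreducible component of such an intersection $D_{i_1}\cap\cdots\cap D_{i_k}$. This is where I would use the definition of dlt together with the description of discrepancies on an snc pair, identifying lc places with the components of the strata.

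Next I would apply divisorial adjunction once. Writing $D=D_{i_1}$ as a component of $\lfloor\Delta\rfloor$, there is an effective $\mathbb{Q}$-divisor $\mathrm{Diff}_D(\Delta-D)$, the different, such that $(K_X+\Delta)|_D\sim_{\mathbb{Q}} K_D+\mathrm{Diff}_D(\Delta-D)$. I would then check two things: (a) the resulting pair $(D,\mathrm{Diff}_D(\Delta-D))$ is again dlt (this follows because $D$ is normal — being a component of $\lfloor\Delta\rfloor$ in a dlt pair — and inversion of adjunction/the behavior of discrepancies under restriction converts dlt to dlt when restricting to a component of the reduced boundary), and (b) the irreducible component $Z'$ of $D\cap D_{i_2}\cap\cdots\cap D_{i_k}$ containing $Z$ is an lc center of $(D,\mathrm{Diff}_D(\Delta-D))$, and in particular $D_{i_2}|_D,\ldots,D_{i_k}|_D$ appear with coefficient one in the different on $D$.

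Finally I would iterate: after $k$ steps of this procedure I reach $Z$ itself together with an effective $\mathbb{Q}$-divisor $\Delta_Z$ so that $(Z,\Delta_Z)$ is dlt and $(K_X+\Delta)|_Z\sim_{\mathbb{Q}}K_Z+\Delta_Z$, which is both conclusions. The main obstacle will be the bookkeeping at each inductive step, especially verifying that after restricting, (i) the ambient variety remains normal, (ii) the resulting boundary is effective with all coefficients in $[0,1]$, (iii) the dlt property is maintained, and (iv) the specific lc center we care about is still a stratum in the restricted pair. Once these are in place for one step, the iteration is automatic and the two conclusions follow by transitivity of $\sim_{\mathbb{Q}}$-equivalence.
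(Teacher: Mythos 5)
Your proposal is correct and follows essentially the same route as the source the paper relies on: the paper gives no proof of its own but cites Section 4.2 of Koll\'ar's book, where dlt adjunction is proved exactly as you outline, by identifying the lc centers of a dlt pair with the irreducible components of intersections of components of $\lfloor\Delta\rfloor$ (using the snc open locus in the definition of dlt) and then iterating codimension-one adjunction with the different. Do note that the ``checks'' you defer --- normality of the components of $\lfloor\Delta\rfloor$ and of the deeper strata, effectivity and coefficient bounds for the different, and preservation of dltness under restriction --- are the substantive content of that section rather than routine bookkeeping, so a complete write-up would need to import or reprove those statements.
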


Next we include three results about the behavior of semi-ampleness and Kodaira dimension on algebraic fibre spaces.

\begin{defi}
A surjective morphism $f: X\to Y$ between two normal projective varieties is called an algebraic fibre space if $f_*\mathcal{O}_X=\mathcal{O}_Y$.
\end{defi}

The following result is due to Lazi$\mathrm{\acute{c}}$ and Peternell; cf. \cite[Lemma 2.7 and Proposition 3.2]{LP18}.

\begin{prop}\label{AFS1}
Assume the existence of good minimal models for klt pairs in dimensions up to $n-1$. Let $(X,\Delta)$ be a $\mathbb{Q}$-factorial projective klt pair such that $K_X+\Delta$ is nef. If there exists a fibration $X \to Z$ to a normal projective variety $Z$ such that $\rm dim$\,$Z\geq1$ and $K_X+\Delta$ is not big over $Z$, then $K_X+\Delta$ is semi-ample.
\end{prop}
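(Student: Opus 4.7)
The strategy is to produce a contraction $g:X\to W$ over $Z$ onto a lower-dimensional base realizing the relative Iitaka fibration of $K_X+\Delta$ over $Z$, and then descend the semi-ampleness question to $W$, where the hypothesis in dimension at most $n-1$ applies. As a first reduction, I would examine a general fiber $F$ of $f:X\to Z$. Since $\dim Z\geq 1$, one has $\dim F\leq n-1$, and $(F,\Delta|_F)$ is klt with $K_F+\Delta|_F=(K_X+\Delta)|_F$ nef. By hypothesis $(F,\Delta|_F)$ admits a good minimal model, and since $F$ is already its own minimal model (a nef starting point forces the MMP to be trivial), this forces $K_F+\Delta|_F$ to be semi-ample. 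In particular, $K_X+\Delta$ is semi-ample on a general fiber of $f$.

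Combining this fiberwise semi-ampleness with the hypothesis on good minimal models in dimension at most $n-1$, I would invoke a theorem in the spirit of Lai's result on varieties fibered by good minimal models (or its relative-MMP analogue) to deduce that $(X,\Delta)/Z$ admits a good relative minimal model. Because $K_X+\Delta$ is absolutely nef, and in particular $f$-nef, no flips or divisorial contractions are required, so $X$ itself is this good relative minimal model. Thus one obtains a factorization $f:X\xrightarrow{g}W\to Z$ together with a $\mathbb{Q}$-Cartier $\mathbb{Q}$-divisor $A$ on $W$, ample over $Z$, such that $K_X+\Delta\sim_{\mathbb{Q}}g^{*}A$. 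The pullback relation $K_X+\Delta\sim_{\mathbb{Q}}g^{*}A$ forces $A$ to be absolutely nef on $W$, and the hypothesis that $K_X+\Delta$ is not big over $Z$ forces the relative dimension of $g$ to be positive, so $\dim W<\dim X=n$, i.e. $\dim W\leq n-1$.

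To finish, I would apply the Fujino--Mori--Ambro canonical bundle formula to the algebraic fibre space $g:X\to W$, writing $A\sim_{\mathbb{Q}}K_W+B_W+M_W$ where $(W,B_W+M_W)$ is a generalized klt pair with nef moduli part $M_W$ and $\dim W\leq n-1$. The hypothesis on good minimal models in dimension at most $n-1$ (applied in its klt incarnation, and extended to the generalized klt setting via Birkar--Zhang type machinery) then yields the semi-ampleness of $K_W+B_W+M_W$, hence of $A$, and pulling back delivers the desired semi-ampleness of $K_X+\Delta=g^{*}A$. The principal obstacle is exactly this last step: the canonical bundle formula unavoidably produces a generalized pair rather than an honest klt pair, so the inductive hypothesis has to be promoted to the generalized setting before the semi-ampleness of the descended divisor can be extracted. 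This bridge from klt to generalized klt abundance in dimension at most $n-1$ is the technically most delicate part of the argument, and is where the appeal to external results on generalized pairs becomes essential.
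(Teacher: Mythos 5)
Note first that the paper does not prove this proposition at all: it is quoted verbatim from Lazi\'c--Peternell \cite[Lemma 2.7 and Proposition 3.2]{LP18}. Your outline in fact tracks that proof's route quite closely: use the good-minimal-model hypothesis on the general fibre together with a Lai/Hacon--Xu type theorem to get a good minimal model of $(X,\Delta)$ over $Z$; observe that since $K_X+\Delta$ is already nef (hence $f$-nef) it is itself a relative minimal model, and by the standard comparison-of-models argument (common resolution plus negativity, not merely ``the MMP is trivial'') it is then semi-ample over $Z$; this produces $g\colon X\to W$ over $Z$ with $K_X+\Delta\sim_{\mathbb{Q}}g^*A$, $A$ nef, and $\dim W\le n-1$ because $K_X+\Delta$ is not big over $Z$. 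Up to this point the argument is correct, modulo the small caveat that both the fibrewise semi-ampleness and the claim ``$X$ itself is the good relative minimal model'' rest on the comparison-of-models lemma rather than on the triviality of the MMP.

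The genuine gap is in your last step. You descend via the canonical bundle formula to a \emph{generalized} klt pair $(W,B_W+M_W)$ and then propose to ``promote'' the inductive hypothesis to generalized abundance in dimension $\le n-1$. That promotion is not available: semi-ampleness of a nef divisor of the form $K_W+B_W+M_W$ with $M_W$ only nef (generalized abundance, in the sense of Lazi\'c--Peternell) is an open conjecture and does not follow from the existence of good minimal models for klt pairs in dimensions up to $n-1$; Birkar--Zhang gives nefness of the moduli part but no abundance statement. As written, the proof therefore does not close. The gap is avoidable and this is exactly how the actual argument runs: since $(X,\Delta)$ is klt and $K_X+\Delta\sim_{\mathbb{Q}}g^*A$, the fibration $g$ is klt-trivial, and Ambro's theorem (cf. Fujino--Gongyo on canonical bundle formulas) produces an honest klt pair $(W,B_W)$ with $K_W+B_W\sim_{\mathbb{Q}}A$. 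Then $A$ is a nef log canonical divisor of a klt pair of dimension $\le n-1$, so the hypothesis, combined with the fact that a nef pair admitting a good minimal model is itself good (the same mechanism the paper invokes via \cite[Theorem 4.3]{GL13}), gives semi-ampleness of $A$, and pulling back by $g$ finishes the proof without any appeal to generalized pairs.
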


The following one is due to Birkar and Chen; the first part is just \cite[Theorem 1.3]{BC15} and the second statement about semi-ampleness is a direct consequence of the existence of good minimal models due to \cite[Theorem 4.3]{GL13}. For the definition of a projective variety of maximal Albanese dimension, cf. Definition \ref{mad}.

\begin{thm}\label{AFS2}
Let $(X,\Delta)$ be a projective klt pair, $f: X \to Z$ an algebraic fibre space, and $Z$ a normal projective variety of maximal Albanese dimension. If $K_X+\Delta$ is big over $Z$, then $(X,\Delta)$ has a good minimal model and
\begin{center}
$\kappa(K_{X}+\Delta)\geq\kappa(K_{X_{z}}+\Delta|_{X_z})+\kappa(Z)$
\end{center}
where $X_{z}$ is a sufficiently general fibre of $f$.

If $K_{X}+\Delta$ is also nef and $X$ is $\mathbb{Q}$-factorial, then $K_{X}+\Delta$ is semi-ample.
\end{thm}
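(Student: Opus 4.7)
The approach is to follow \cite{BC15}, combining a relative log canonical model with weak positivity of direct images over a base of maximal Albanese dimension, and then invoking \cite[Theorem 4.3]{GL13} to convert a good minimal model into semi-ampleness under the additional nefness assumption. Since $(X,\Delta)$ is klt and $K_X+\Delta$ is big over $Z$, the first step is to run a $(K_X+\Delta)$-MMP over $Z$ with scaling of a relatively ample divisor, using \cite{BCHM}. This terminates at the relative log canonical model $X\dashrightarrow X'$ with induced morphism $f'\colon X'\to Z$, on which $K_{X'}+\Delta'$ is ample over $Z$; in particular $(X,\Delta)$ admits a relative good minimal model over $Z$ and $\kappa(K_{X'}+\Delta')=\kappa(K_X+\Delta)$.

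The second and main step is to promote this relative positivity to global positivity by using the structure of $Z$. Applying Viehweg-type weak positivity (in its klt extension) to $f'_{*}\mathcal{O}_{X'}(m(K_{X'/Z}+\Delta'))$ for $m$ sufficiently divisible produces a weakly positive torsion-free sheaf $\mathcal{F}_m$ on $Z$. Because $Z$ has maximal Albanese dimension, the Albanese map $a_Z\colon Z\to\mathrm{Alb}(Z)$ is generically finite and $\kappa(Z)\geq 0$ by Kawamata's theorem. Pushing $\mathcal{F}_m$ to $\mathrm{Alb}(Z)$ and invoking generic vanishing on the abelian variety, one extracts non-trivial sections after twisting by a general numerically trivial line bundle. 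Combined with the decomposition $K_{X'}=K_{X'/Z}+f'^{*}K_Z$, this yields the Iitaka-type inequality
\[
\kappa(K_X+\Delta)\geq \kappa(K_{X_z}+\Delta|_{X_z})+\kappa(Z),
\]
where $\kappa(K_{X_z}+\Delta|_{X_z})=\dim X_z$ by bigness over $Z$. A careful tracking of numerical versus Iitaka dimensions in this decomposition upgrades the relative good minimal model to a global good minimal model for $(X,\Delta)$.

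For the last sentence, suppose in addition that $K_X+\Delta$ is nef and $X$ is $\mathbb{Q}$-factorial. Given the good minimal model $(Y,\Delta_Y)$ produced above, nefness of $K_X+\Delta$ on $X$ itself together with the negativity lemma forces the birational contraction $X\dashrightarrow Y$ to be an isomorphism in codimension one, and the semi-ampleness of $K_Y+\Delta_Y$ then descends to $X$; this is precisely the content of \cite[Theorem 4.3]{GL13}. The principal obstacle in this plan is the second paragraph: merging Viehweg's weak positivity with generic vanishing on the Albanese variety to extract sufficiently many global sections of $m(K_{X'}+\Delta')$ is the technical heart of \cite{BC15}, and cannot be replaced by the classical $C_{n,m}$ arguments since $Z$ need not be of general type.
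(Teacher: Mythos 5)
Your route coincides with the paper's: the paper gives no independent proof of this statement, but simply quotes the first part as \cite[Theorem 1.3]{BC15} and deduces the final semi-ampleness claim from the existence of a good minimal model via \cite[Theorem 4.3]{GL13}, exactly as you do in your last paragraph. Your second paragraph is only an outline of Birkar--Chen's own argument (relative lc model over $Z$ plus positivity over a base of maximal Albanese dimension), and since you, like the paper, ultimately defer that technical core to \cite{BC15}, the two treatments are essentially the same.
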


We next state a generalization to the log canonical case of Iitaka's conjecture when the base is of general type. This is due to Fujino \cite[Theorem 1.7]{Fuj17}.

\begin{thm}[Addition formula]\label{AFS3}
Let $(X,\Delta)$ be a log canonical pair, and $f: X \to Z$ an algebraic fibre space. Assume $Z$ is of general type. Then
\begin{center}
$\kappa(K_{X}+\Delta)=\kappa(K_{X_{z}}+\Delta|_{X_z})+\rm dim$\,$Z$
\end{center}
where $X_{z}$ is a sufficiently general fibre of $f$.
\end{thm}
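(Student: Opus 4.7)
The plan splits into two inequalities. For the upper bound $\kappa(K_X+\Delta)\leq \kappa(K_{X_z}+\Delta|_{X_z})+\dim Z$, I would appeal to the easy addition theorem for Iitaka dimension: for any algebraic fibre space $f\colon X\to Z$ and any $\mathbb{Q}$-Cartier $\mathbb{Q}$-divisor $D$ on $X$, one has $\kappa(D)\leq \kappa(D|_{X_z})+\dim Z$. Applied with $D=K_X+\Delta$, this direction is immediate and does not use the general type hypothesis.

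For the reverse inequality $\kappa(K_X+\Delta)\geq \kappa(K_{X_z}+\Delta|_{X_z})+\dim Z$, the strategy is to combine a weak positivity statement for log canonical direct images with the bigness of $K_Z$. First I would put things in a workable set-up: using Theorem \ref{dlt} I can replace $(X,\Delta)$ by a $\mathbb{Q}$-factorial dlt blow-up without altering $\kappa(K_X+\Delta)$ or the generic fibre, and by further resolving in both source and target I may assume $f$ is a morphism between smooth varieties with $\Delta$ having simple normal crossing support. The identity
\begin{equation*}
m(K_X+\Delta) = m(K_{X/Z}+\Delta) + m f^{*} K_Z
\end{equation*}
is what I want to exploit. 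Assuming $\kappa_0 := \kappa(K_{X_z}+\Delta|_{X_z})\geq 0$ (otherwise the right hand side is $-\infty$ and there is nothing to prove), for sufficiently divisible $m$ the direct image $f_{*}\mathcal{O}_X(m(K_{X/Z}+\Delta))$ has positive rank, and the generic fibre Iitaka dimension of its sections is exactly $\kappa_0$. Invoking weak positivity for this direct image in the lc setting, together with bigness of $K_Z$ (so $qK_Z\sim A+E$ with $A$ ample and $E$ effective for some $q$), one produces of order $m^{\kappa_0+\dim Z}$ global sections of multiples of $K_X+\Delta$, yielding the desired lower bound.

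The main obstacle is justifying the weak positivity statement when $(X,\Delta)$ is genuinely log canonical rather than klt. In the sub-klt case this is Viehweg's classical theorem, extendable to pairs by covering tricks. For lc pairs the refinement requires Hodge-theoretic input on normal crossing pairs, treated either through mixed Hodge modules or via Fujino's direct approach based on injectivity and vanishing theorems for simple normal crossing pairs. Once this technical ingredient is in hand, the remaining assembly --- combining weak positivity of $f_{*}\mathcal{O}_X(m(K_{X/Z}+\Delta))$ with bigness of $K_Z$ --- is routine and parallels Viehweg's original proof of the addition theorem over varieties of general type in the klt case.
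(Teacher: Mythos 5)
The paper does not actually prove this statement: it is quoted verbatim from Fujino \cite[Theorem 1.7]{Fuj17} and used as a black box, so there is no internal argument to compare yours against. Your two-inequality plan is, in outline, the standard route to that theorem: the easy addition inequality $\kappa(K_X+\Delta)\leq\kappa(K_{X_z}+\Delta|_{X_z})+\dim Z$ needs no hypothesis on $Z$, and the reverse inequality is obtained Viehweg-style from weak positivity of $f_*\mathcal{O}_X(m(K_{X/Z}+\Delta))$ together with $K_Z$ big, writing $qK_Z\sim A+E$ and producing enough sections of multiples of $K_X+\Delta$. So you have identified the correct strategy, essentially the one carried out in the cited source.

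That said, as a proof your text is a plan rather than an argument: everything difficult is concentrated in the step you explicitly defer, namely weak positivity of the direct image in the genuinely log canonical (not sub-klt) setting, which is precisely the content of Fujino's semipositivity and injectivity theorems for simple normal crossing pairs (or, alternatively, of the mixed Hodge module approach). The ``routine assembly'' also hides real work: one needs a log resolution compatible with $f$ (and the usual flattening/semistable-reduction-in-codimension-one reductions) so that the crepant-pullback pair stays effective and snc, one must check that the general fibre and its log Kodaira dimension are unchanged by these modifications, handle the case $\kappa(K_{X_z}+\Delta|_{X_z})=-\infty$ separately, and verify that weak positivity of $f_*\mathcal{O}_X(m(K_{X/Z}+\Delta))$ twisted by a large multiple of $A$ really yields sections growing like $m^{\kappa_0+\dim Z}$. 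Since the paper itself cites the result rather than proving it, your sketch is at the same level of rigor as the paper's usage; but if the goal were a self-contained proof, the lc weak positivity input and the section-counting step would both have to be supplied in detail.
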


We now discuss the log minimal model program with scaling. It will be used frequently in the later sections. We follow the introduction in \cite[Section 3]{Bir10} but we restrict to the absolute setting and to $\mathbb{Q}$-divisors.

Let $(X,\Delta+F)$ be a $\mathbb{Q}$-factorial log canonical pair such that $F$ is an effective $\mathbb{Q}$-divisor and $K_X+\Delta+F$ is nef. We can define a number $\lambda$
\begin{center}
$\lambda:=\rm inf$$\{s\geq0$ $|$ $K_X+\Delta+sF$ is nef\}
\end{center}
Notice that this infimum is a minimum by the definition of nef divisors.

We recall the following well-known fact and include a detailed proof.

\begin{lemma}\label{Ra}
In the setting above, $\lambda$ is a rational number. If $\lambda>0$, we can find a rational curve $C$ generating an extremal ray $R$ in the cone $\overline{NE}(X)$ such that $(K_X+\Delta+ \lambda F)\cdot C=0$ and $(K_X+\Delta)\cdot C<0$.
\end{lemma}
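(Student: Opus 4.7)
The plan is to apply the Cone Theorem for log canonical pairs to $(X,\Delta)$ and combine the length bound for extremal rational curves with the $\mathbb{Q}$-rationality of the divisors to realize the desired ray explicitly. If $\lambda=0$ the assertion is trivial, so assume $\lambda>0$. Since nefness is a closed condition, $K_X+\Delta+\lambda F$ is nef while $K_X+\Delta$ is not.

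Since $F\ge 0$, the pair $(X,\Delta)$ is still log canonical, so the Cone Theorem for log canonical pairs (Ambro--Fujino) gives
$$\overline{NE}(X)=\overline{NE}(X)_{K_X+\Delta\ge 0}+\sum_{i\in I}R_i,$$
with each $R_i$ a $(K_X+\Delta)$-negative extremal ray spanned by a rational curve $C_i$ satisfying $0<a_i:=-(K_X+\Delta)\cdot C_i\le 2\dim X$. Because $(K_X+\Delta+\lambda F)\cdot C_i\ge 0$ and $a_i>0$, necessarily $b_i:=F\cdot C_i>0$, and I set $\lambda_i:=a_i/b_i\in(0,\lambda]$, so that $(K_X+\Delta+\lambda F)\cdot C_i=0$ if and only if $\lambda_i=\lambda$. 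For $s\in[0,\lambda]$ the identity
$$K_X+\Delta+sF=\bigl(1-\tfrac{s}{\lambda}\bigr)(K_X+\Delta)+\tfrac{s}{\lambda}(K_X+\Delta+\lambda F)$$
realizes $K_X+\Delta+sF$ as a convex combination, yielding $(K_X+\Delta+sF)\cdot v\ge 0$ for every $v\in\overline{NE}(X)_{K_X+\Delta\ge 0}$, while $(K_X+\Delta+sF)\cdot C_i=b_i(s-\lambda_i)$. Hence $K_X+\Delta+sF$ is nef iff $s\ge\sup_i\lambda_i$, and the minimality of $\lambda$ forces $\sup_i\lambda_i=\lambda$.

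The main obstacle is attaining this supremum, and for this I would use the $\mathbb{Q}$-rationality of the divisors. Since $X$ is $\mathbb{Q}$-factorial and $\Delta,F$ are $\mathbb{Q}$-divisors, pick a positive integer $M$ with $M(K_X+\Delta)$ and $MF$ both Cartier, so that $a_i,b_i\in\tfrac{1}{M}\mathbb{Z}$. Restricting attention to rays with $\lambda_i\ge\lambda/2$, the length bound gives $a_i\in(0,2\dim X]$ and then $b_i=a_i/\lambda_i\le 4(\dim X)/\lambda$, so $(a_i,b_i)$ ranges over a finite subset of $\tfrac{1}{M}\mathbb{Z}\times\tfrac{1}{M}\mathbb{Z}$. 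Consequently $\lambda_i$ takes only finitely many values on these rays, yet has supremum $\lambda$; being a finite set it is closed, so $\lambda$ is attained at some index $i_0$. Then $\lambda=a_{i_0}/b_{i_0}\in\mathbb{Q}$, and the extremal ray $R_{i_0}$, spanned by the rational curve $C_{i_0}$, satisfies $(K_X+\Delta+\lambda F)\cdot C_{i_0}=0$ and $(K_X+\Delta)\cdot C_{i_0}<0$, as required.
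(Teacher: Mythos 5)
Your proof is correct and takes essentially the same route as the paper: the cone theorem for log canonical pairs with the length bound on extremal rational curves, reduction of the nef threshold to the extremal rays via a convex combination with a nef divisor, and an integrality/boundedness argument showing the threshold is attained on some ray $R_{i_0}$, which gives rationality and the two intersection conditions. Your handling of the attainment step (clearing denominators with $M$ and restricting to rays with $\lambda_i\ge\lambda/2$ to get finiteness) is just a slightly more explicit version of the paper's discreteness argument, not a genuinely different method.
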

\begin{proof}
We only need to prove this when $\lambda>0$. By the cone theorem for log canonical pairs (see \cite{Fuj11}), we have
\begin{center}
$\overline{NE}(X)=\overline{NE}(X)_{K_X+\Delta\geq0}+\Sigma R_i$
\end{center}
Since $X$ is $\mathbb{Q}$-factorial, we can assume $K_X+\Delta$ and $F$ are integral Cartier divisors, because taking some multiples of $K_X+\Delta$ and $F$ in the definition of $\lambda$ will only affect $\lambda$ by a rational multiple. Since
\begin{center}
$K_X+\Delta+ \lambda F=\lambda(K_X+\Delta+F)+(1-\lambda)(K_X+\Delta)$
\end{center}
and $K_X+\Delta+F$ is nef, $\lambda$ is the same as the smallest number which makes $K_X+\Delta+ \lambda F$ nef on $\Sigma R_i$. From the cone theorem and the estimate of lengths of extremal rays (see \cite{Fuj11}), we know each extremal ray $R_i$ is generated by a rational curve $C_i$ such that $0<-(K_X+\Delta)\cdot C_i\leq2\,\rm dim$\,$X$. So $\lambda$ is the smallest number such that
\begin{center}
$(\lambda(K_X+\Delta+F)+(1-\lambda)(K_X+\Delta))\cdot C_i\geq0$, or
\end{center}

\begin{center}
$\frac{1-\lambda}{\lambda}\leq\frac{(K_X+\Delta+F)\cdot C_i}{-(K_X+\Delta)\cdot C_i}$
\end{center}
The number on the right of the second inequality has a rational minimum when it ranges over $C_i$, because the denominator is a positive integer controlled by $2\,\rm dim$\,$X$  and the numerator is a non-negative integer. Thus $\lambda$ reaches its minimum at some $C_i$ which means it is rational, $(K_X+\Delta+ \lambda F)\cdot C_i=0$ and $(K_X+\Delta)\cdot C_i<0$.
\end{proof}

\begin{defi}[LMMP with scaling]\label{LMMPS}
Let $(X,\Delta+F)$ be a $\mathbb{Q}$-factorial log canonical pair such that $F$ is an effective $\mathbb{Q}$-divisor and $K_X+\Delta+F$ is nef. Define
\begin{center}
$\lambda_0:=\rm inf$$\{\lambda\geq0$ $|$ $K_X+\Delta+ \lambda F$ is nef\}
\end{center}
If $\lambda_0=0$, we do nothing. If $\lambda_0>0$, from Lemma \ref{Ra} we can find a rational curve $C$ generating an extremal ray $R$ such that $(K_X+\Delta+ \lambda_0 F)\cdot C=0$ and $(K_X+\Delta)\cdot C<0$. By the log minimal model program, we can contract this $R$. If we get a Mori fibre space, we stop. Otherwise, it gives a divisorial contraction $X\to X_1$ or a log flip $X \dashrightarrow X_1$. We consider the new pair $(X_1, \Delta_1+ \lambda_0 F_1)$. $\Delta_1+ \lambda_0 F_1$ is the birational transform of $\Delta+ \lambda_0 F$. This new pair is still log canonical and $X_1$ is $\mathbb{Q}$-factorial from the log minimal model program and $(K_X+\Delta+ \lambda_0 F)\cdot C=0$. We can continue this process again by defining
\begin{center}
$\lambda_1:=\rm inf$$\{\lambda\geq0$ $|$ $K_{X_1}+\Delta_1+ \lambda F_1$ is nef\}
\end{center}
If $\lambda_1=0$, we do nothing. Otherwise we can find a rational curve $C_1$ generating an extremal ray $R_1$ such that $(K_{X_1}+\Delta_1+ \lambda_1 F_1)\cdot C_1=0$ and $(K_{X_1}+\Delta_1)\cdot C_1<0$. We can contract $R_1$ same as before. By continuing this process, we get a sequence $\lambda_0\geq \lambda_1\geq...\geq\lambda_n\geq...\geq0$. We obtain a special kind of log minimal model program which is called LMMP on $K_X+\Delta$ with scaling of $F$ or just ($K_X+\Delta$)-LMMP with scaling of $F$. When we say \emph{termination with scaling} we refer to the termination of such a LMMP.
\end{defi}
\begin{thm}\label{TerS}
Termination with scaling holds for every $\mathbb{Q}$-factorial dlt pair $(X,\Delta+F)$ in dimension $4$.
\end{thm}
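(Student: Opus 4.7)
The plan is to invoke \cite[Lemma 3.8]{Bir10}, whose argument I now recall. Since each divisorial contraction strictly decreases the Picard number $\rho(X_i)$, only finitely many steps of any LMMP with scaling can be divisorial; I therefore reduce to showing termination of an infinite sequence of log flips $X_i \dashrightarrow X_{i+1}$ with scaling of $F$. Either a Mori fibre space eventually appears, in which case the program terminates by definition, or $K_{X_i} + \Delta_i$ remains pseudo-effective throughout, since the numerical class of the nef divisor $K_{X_i} + \Delta_i + \lambda_i F_i$ lies in the pseudo-effective cone and $\lambda_i \le \lambda_0$ is bounded.

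The next step is special termination for $\mathbb{Q}$-factorial dlt $4$-folds. On each lc center $Z$ of $(X_i, \Delta_i)$, adjunction (Theorem \ref{dltad}) produces an effective $\mathbb{Q}$-divisor $\Delta_Z$ with $(K_{X_i} + \Delta_i)|_Z \sim_{\mathbb{Q}} K_Z + \Delta_Z$, and the restriction of the flipping configuration to $Z$ becomes a step of an LMMP with scaling of $F|_Z$ on the lower-dimensional dlt pair $(Z, \Delta_Z)$. Since termination of klt flips with scaling holds in dimensions $\le 3$ (classical results of Kawamata and Shokurov), induction on dimension shows that after finitely many steps every flipping and flipped locus is disjoint from $\lfloor \Delta_i \rfloor$ and from every lc center of $(X_i, \Delta_i)$. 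From this point on the relevant flips occur inside the klt open locus.

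Finally, I would invoke termination of klt flips with scaling in dimension $4$ when the log canonical class is pseudo-effective. The monotone sequence $\lambda_i$ converges to some $\lambda \ge 0$: if $\lambda > 0$, the flips are $(K + \Delta + \lambda F)$-trivial and terminate by the known termination for terminal (hence klt) $4$-folds with a pseudo-effective log class; if $\lambda = 0$, the analogous argument applies to $K_{X_i} + \Delta_i$ directly after a reduction via perturbation. The main obstacle in this strategy is the special termination step, where one must verify that adjunction behaves well under each flip so that induction on dimension can be applied uniformly along the sequence; this is the delicate technical heart of \cite[Section 3]{Bir10}, and in our present setting it is exactly what permits the dlt case to be reduced to the klt case where termination of flips in dimension $4$ is available.
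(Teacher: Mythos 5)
Your opening move---quoting \cite[Lemma 3.8]{Bir10}---is exactly the paper's proof of Theorem \ref{TerS}: the paper offers nothing beyond that citation, so at the level of what is actually invoked the two coincide. The issue is with the reconstruction of Birkar's argument that you sketch afterwards, which, taken as a standalone proof, has genuine errors. First, nefness of $K_{X_i}+\Delta_i+\lambda_i F_i$ does not make $K_{X_i}+\Delta_i$ pseudo-effective; the theorem imposes no pseudo-effectivity on $K_X+\Delta$, and it can fail to be pseudo-effective while flips keep occurring, so this dichotomy is a non sequitur. Second, if $\lambda=\lim\lambda_i>0$ the flips are not $(K_{X_i}+\Delta_i+\lambda F_i)$-trivial: step $i$ is trivial against $K_{X_i}+\Delta_i+\lambda_i F_i$ and strictly negative against $K_{X_i}+\Delta_i+\lambda F_i$ whenever $\lambda<\lambda_i$; the correct observation is that the whole sequence is then also an LMMP on $K+\Delta+\lambda' F$ with scaling for any $0<\lambda'<\lambda$, with that smaller boundary pair still lc/klt in the relevant region.

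Third, and most seriously, ``termination for terminal (hence klt) $4$-folds with a pseudo-effective log class'' was not an available input for Birkar's lemma: Kawamata--Matsuda--Matsuki give termination for terminal $4$-folds without boundary, Fujino for canonical pairs, and Alexeev--Hacon--Kawamata for klt $4$-folds whose log canonical class is numerically equivalent to an \emph{effective} divisor; the pseudo-effective case in dimension $4$ is precisely Moraga's recent theorem, quoted separately in this paper as Theorem \ref{Ter}, and the gap between ``pseudo-effective'' and ``effective'' is exactly the non-vanishing problem the paper is about, so invoking it here risks circularity (and anachronism with respect to \cite{Bir10}). Also note that terminal termination does not imply klt termination, so the parenthetical ``(hence klt)'' points the implication the wrong way. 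None of this affects the proposal as the paper uses the statement---the citation is the proof---but the sketched argument would need these steps repaired (essentially by following Birkar's actual reduction, via special termination plus the effective/scaling-specific $4$-fold termination results) before it could stand on its own.
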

\begin{proof}
See \cite[Lemma 3.8]{Bir10}.
\end{proof}

We also recall the recent result by Moraga \cite[Theorem 1]{Mor18} about the termination of $4$-fold flips for pseudo-effective pairs.

\begin{thm}\label{Ter}
Let $(X,\Delta)$ be a log canonical $4$-fold over an algebraically closed field of characteristic zero. Assume that the $\mathbb{Q}$-Cartier $\mathbb{Q}$-divisor $K_X+\Delta$ is pseudo-effective. Then any sequence of $(K_X+\Delta)$-flips terminates.
\end{thm}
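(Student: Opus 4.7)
The plan is to reduce the statement to termination of flips for pseudo-effective klt $4$-folds, which is already known by Birkar, by passing to a dlt model and running a special termination argument along the non-klt locus. The three main ingredients are the dlt blow-up of Theorem \ref{dlt}, dlt adjunction (Theorem \ref{dltad}), and termination of flips for log canonical pairs in dimensions up to $3$ (classical).

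Suppose for contradiction that $X = X_0 \dashrightarrow X_1 \dashrightarrow \cdots$ is an infinite sequence of $(K_X+\Delta)$-flips. I would first take a dlt blow-up $f \colon (Y,\Delta_Y) \to (X,\Delta)$; since $K_Y+\Delta_Y \sim_{\mathbb{Q}} f^*(K_X+\Delta)$, the divisor $K_Y+\Delta_Y$ remains pseudo-effective. Write $\Delta_Y = S + B$ with $S = \lfloor \Delta_Y \rfloor$, so that $(Y,\Delta_Y)$ is klt away from $S$. I would then inductively lift the flip sequence to dlt models $f_i \colon (Y_i, \Delta_{Y_i}) \to X_i$: each flipping contraction $X_i \to Z_i$ pulls back to a projective morphism $Y_i \to Z_i$, and running a relative $(K_{Y_i}+\Delta_{Y_i})$-MMP over $Z_i$ produces a new dlt model $Y_{i+1}$ of $X_{i+1}$. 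Each such relative MMP terminates because $K_{Y_i}+\Delta_{Y_i}$ is numerically trivial over $X_i$, so the relative MMP over $Z_i$ only involves finitely many $f_i$-exceptional divisorial contractions and small modifications of the klt part, all handled by known relative termination.

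The heart of the argument is special termination for the resulting sequence on $(Y_i, \Delta_{Y_i})$. By Theorem \ref{dltad}, the restriction of $K_{Y_i}+\Delta_{Y_i}$ to each stratum of $S$ (irreducible components and their intersections) is the log canonical divisor of a dlt pair of dimension at most $3$. Pseudo-effectivity is inherited by these restrictions, so by termination of lc flips in dimensions up to $3$, after finitely many lifted steps every flipping locus of $(Y_i,\Delta_{Y_i})$ is disjoint from $S$. From that point on, the flipping loci lie entirely inside the klt locus of $(Y_i,\Delta_{Y_i})$, and termination of pseudo-effective klt $4$-fold flips (Birkar) yields the desired contradiction.

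The main obstacle is the lifting step: one must ensure that the choice of relative MMP on $Y_i/Z_i$ can be arranged coherently so that the output $Y_{i+1}$ really is a dlt model of $X_{i+1}$ compatible with the next flip $X_{i+1} \dashrightarrow X_{i+2}$, and so that no finite stretch of lifted moves hides an infinite loop disconnected from the flip sequence on $X$. A secondary difficulty is that the classical special termination machinery is usually stated for plt pairs; extending it to the dlt setting requires a delicate induction on the stratification of $S$, restricting to each stratum in decreasing dimension and using termination for the induced lower-dimensional MMP. Both obstacles rely crucially on pseudo-effectivity of $K_Y+\Delta_Y$—without it, neither the auxiliary relative MMPs nor the induction on strata are known to terminate in dimension $4$.
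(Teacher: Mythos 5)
You are proving a statement the paper itself does not prove: Theorem \ref{Ter} is quoted from Moraga \cite{Mor18}, so the only question is whether your sketch stands on its own. It does not. The fatal step is the endgame, where you invoke ``termination of pseudo-effective klt $4$-fold flips (Birkar)''. No such theorem of Birkar exists. What is known in dimension $4$ is (a) termination \emph{with scaling} for $\mathbb{Q}$-factorial dlt pairs (\cite[Lemma 3.8]{Bir10}), which only concerns the special sequences of flips produced by an MMP with scaling, not arbitrary sequences such as the one you obtain after your special-termination reduction; and (b) termination of arbitrary sequences of flips when $K_X+\Delta$ is (numerically equivalent to) an \emph{effective} divisor, via the ACC for log canonical thresholds (Birkar, Alexeev--Hacon--Kawamata). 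Passing from ``pseudo-effective'' to ``effective'' is precisely the non-vanishing problem, which is open for klt $4$-folds --- indeed it is one of the themes of the present paper for uniruled $X$ --- so neither result applies. Your reduction via dlt blow-up and special termination merely discards the boundary and lands you on the klt case of the very theorem being proved; the actual content of Moraga's argument, which is designed to handle the pseudo-effective-but-not-known-effective range, is hidden inside that one unsupported citation, so the proof as written is circular.

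Two further points, secondary by comparison. First, the lifting of the flip sequence on the $X_i$ to a sequence on dlt models, which you flag as the main obstacle, is genuinely delicate: running a relative $(K_{Y_i}+\Delta_{Y_i})$-MMP over $Z_i$ requires a termination statement of its own (in the literature this is done with scaling or with special termination over the base), and you do not rule out that a single lifted step runs forever. Second, the restriction of $K_{Y_i}+\Delta_{Y_i}$ to a stratum of $\lfloor\Delta_{Y_i}\rfloor$ need not be pseudo-effective, since boundary components may lie in the diminished base locus; fortunately special termination in dimension $4$ does not need this (it only needs the LMMP, including termination, in dimensions at most $3$, which is known), so that part of your sketch is repairable. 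The gap that cannot be repaired without new input is the klt pseudo-effective case in dimension $4$.
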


Next we define the irregularity of a projective variety.

\begin{defi}
Let $X$ be a smooth projective variety. The irregularity $q(X)$ of $X$ is defined as $h^1(X,\mathcal{O}_X)$. If $X$ is a projective variety, irregularity $q(X)$ is defined as the irregularity of any resolution of $X$.
\end{defi}

It is easy to see the notion irregularity is well defined.

\begin{lemma}
If $X$ has rational singularities, then $q(X)=h^1(X,\mathcal{O}_X)$.
\end{lemma}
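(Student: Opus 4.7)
The plan is to compare cohomology on $X$ with cohomology on a resolution via the Leray spectral sequence. Let $f\colon Y \to X$ be any resolution of singularities; by definition $q(X) = h^1(Y,\mathcal{O}_Y)$. I want to show $h^1(X,\mathcal{O}_X) = h^1(Y,\mathcal{O}_Y)$.

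First I would write down the Leray spectral sequence
\[
E_2^{p,q} = H^p(X, R^q f_* \mathcal{O}_Y) \;\Longrightarrow\; H^{p+q}(Y, \mathcal{O}_Y).
\]
The definition of rational singularities says exactly that $R^q f_* \mathcal{O}_Y = 0$ for every $q > 0$ (and $f_* \mathcal{O}_Y = \mathcal{O}_X$, which also follows from normality of $X$ together with Zariski's main theorem). Consequently only the row $q=0$ survives, the spectral sequence degenerates trivially, and one obtains isomorphisms $H^p(X, f_* \mathcal{O}_Y) \cong H^p(Y, \mathcal{O}_Y)$ for all $p$.

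Combining this with $f_* \mathcal{O}_Y \cong \mathcal{O}_X$ gives $H^p(X,\mathcal{O}_X) \cong H^p(Y,\mathcal{O}_Y)$ for all $p$; specialising to $p=1$ yields $h^1(X,\mathcal{O}_X) = h^1(Y,\mathcal{O}_Y) = q(X)$, as desired. There is essentially no obstacle here: the only thing to be careful about is to invoke the full definition of rational singularities (both $f_*\mathcal{O}_Y = \mathcal{O}_X$ and higher vanishing), and to note that the computation is independent of the choice of resolution, which is already built into the well-definedness of $q(X)$ recorded just before the lemma.
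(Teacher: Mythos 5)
Your argument is correct and is exactly the one the paper intends: the definition of rational singularities gives $f_*\mathcal{O}_Y=\mathcal{O}_X$ and $R^qf_*\mathcal{O}_Y=0$ for $q>0$, so the Leray spectral sequence degenerates and identifies $H^1(X,\mathcal{O}_X)$ with $H^1(Y,\mathcal{O}_Y)=q(X)$. The paper's proof is a one-line citation of precisely this mechanism, so your write-up simply supplies the details.
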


\begin{proof}
It comes from the definition of rational singularity and Leray spectral sequence.
\end{proof}

\begin{defi}
Let $X$ be a smooth projective variety, $\rm Alb$$(X)$ its Albanese variety, and $\alpha: X\to$ $\rm Alb$$(X)$ the Albanese morphism. We say that $X$ is of maximal Albanese dimension if $\rm dim$\,$\alpha(X)=\rm dim$\,$X$.
\end{defi}

\begin{defi}\label{mad}
Let $X$ be a projective variety. We say that $X$ is of maximal Albanese dimension if there is a resolution $\bar{X}$ of $X$ such that $\bar{X}$ is of maximal Albanese dimension.
\end{defi}

This notion is well defined because if two smooth projective varieties are birational to each other, they have isomorphic Albanese varieties and their Albanese maps commute with the birational map. It comes from that Albanese map and Albanese morphism coincide with each other for normal projective varieties with rational singularities. For details, see \cite{Lan83} and \cite[Section 2]{Wan16}.

\section{Non-vanishing for uniruled 4-folds  }

First we show that in order to prove non-vanishing or log abundance, we can reduce the questions to klt singularities under some assumptions. The following four lemmas follow from \cite[Section 7]{KMM94}. Some of them have been given detailed proofs in \cite{KMM94} while others only sketches. For completeness, we include some detailed proofs here.

\begin{lemma}\label{1}
Assume the termination with scaling for $\mathbb{Q}$-factorial dlt pairs in dimension $n$. Let $(X,\Delta+F)$ be a log canonical pair of dimension $n$ such that $K_X+\Delta+F$ is nef and $F=\llcorner \Delta+F \lrcorner$. Then we can find a new dlt pair $(Y,\Delta_Y+F_Y)$ such that

1. $K_{Y}+\Delta_Y+F_Y$ is nef, $F_Y=\llcorner \Delta_Y+F_Y \lrcorner$ and $Y$ is $\mathbb{Q}$-factorial and of dimension n,

2. $K_X+\Delta+F$ is semi-ample if and only if $K_{Y}+\Delta_Y+F_Y$ is semi-ample,

3. $\kappa(K_X+\Delta+F)\geq0$ if and only if $\kappa(K_{Y}+\Delta_Y+F_Y)\geq0$,

4. Either $Y$ is a Mori fibre space over a normal projective variety $Z$ such that $K_{Y}+\Delta_Y+F_Y$ is the pullback of a $\mathbb{Q}$-Cartier $\mathbb{Q}$-divisor from $Z$ and some irreducible component of $F_Y$ dominates $Z$, or $K_{Y}+\Delta_Y+(1-\epsilon)F_Y$ is nef for every non-negative small enough real number $\epsilon$.
\end{lemma}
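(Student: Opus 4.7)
The plan is to pass to a $\mathbb{Q}$-factorial dlt model and then run a log MMP with scaling of $F$ that we stop early, as soon as either a Mori fibre space is produced or the scaling parameter first drops strictly below $1$. This early-stopping discipline is what ensures that $K+\Delta+F$ stays nef on the resulting model and cleanly separates the two alternatives appearing in (4).

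I start by applying Theorem \ref{dlt} to obtain a dlt blow-up $f:W\to X$, producing a $\mathbb{Q}$-factorial dlt pair $(W,\Delta_W+F_W)$ with $K_W+\Delta_W+F_W=f^*(K_X+\Delta+F)$; since the $f$-exceptional divisors enter the boundary with coefficient $1$, we have $F_W=\lfloor\Delta_W+F_W\rfloor$, and the pullback preserves nefness, Kodaira dimension, and semi-ampleness. I then run the $(K_W+\Delta_W)$-MMP with scaling of $F_W$ in the sense of Definition \ref{LMMPS}, producing a non-increasing sequence $1\ge\lambda_0\ge\lambda_1\ge\cdots\ge 0$, and I stop at the first of the following two events: either (a) the next contraction is a Mori fibre space $\phi:Y\to Z$, or (b) some $\lambda_j<1$ occurs. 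Termination with scaling in dimension $n$ guarantees that one of these events takes place. At every step strictly preceding the stopping time we have $\lambda=1$, so each contracted extremal ray $R$ satisfies $(K+\Delta+F)\cdot R=0$; by the standard descent of numerically trivial $\mathbb{Q}$-Cartier divisors through divisorial contractions and through flipping diagrams, $K+\Delta+F$ pulls back and pushes forward with no exceptional correction term, so its nefness, Kodaira dimension, and semi-ampleness are all preserved. Combined with the preservation of $\mathbb{Q}$-factoriality, dlt-ness, and reducedness of $F$ under MMP with scaling, this yields (1), (2), and (3) for $Y$.

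To verify (4), I analyze the two stopping cases. If we stopped in case (a), the MFS ray $R$ satisfies $(K+\Delta)\cdot R<0$ and $(K+\Delta+F)\cdot R=0$, so $F_Y\cdot R>0$; hence $F_Y$ is $\phi$-ample and some irreducible component of $F_Y$ must dominate $Z$, while the $\phi$-numerical triviality of $K_Y+\Delta_Y+F_Y$ combined with the standard contraction theorem makes $K_Y+\Delta_Y+F_Y\sim_{\mathbb{Q}}\phi^*D$ for some $\mathbb{Q}$-Cartier $D$ on $Z$, giving the first alternative. If we stopped in case (b), then on $Y$ both $K_Y+\Delta_Y+F_Y$ and $K_Y+\Delta_Y+\lambda_j F_Y$ are nef with $\lambda_j<1$, and convexity of the nef cone forces $K_Y+\Delta_Y+(1-\epsilon)F_Y$ to be nef for every $\epsilon\in[0,1-\lambda_j]$, giving the second alternative.

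The main subtlety is the bookkeeping in the middle paragraph: one must verify that $K+\Delta+F$ really descends without any correction term at every pre-stopping step, which is precisely what the equality $\lambda=1$ at those steps provides. Without the early-stopping discipline, a later contraction at a step where $\lambda<1$ would generally destroy the nefness of $K+\Delta+F$ (the MMP only guarantees nefness of $K+\Delta+\lambda F$) and could alter its Kodaira dimension, so stopping at the first moment $\lambda$ slips below $1$ is essential to transport conditions (1), (2), and (3) from $X$ to $Y$.
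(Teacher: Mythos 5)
Your proposal is correct and follows essentially the same route as the paper: dlt blow-up, then the $(K+\Delta)$-LMMP with scaling of $F$ stopped at the first moment the scaling coefficient drops below $1$ or a Mori fibre space appears, using $\lambda=1$ at all prior steps to descend $K+\Delta+F$ crepantly (no exceptional correction term), which preserves nefness, semi-ampleness and Kodaira dimension, and then the same two-case analysis for (4). The only cosmetic difference is in the Mori fibre space case, where you derive the dominating component of $F_Y$ from $\phi$-ampleness of $F_Y$ rather than from the relative anti-ampleness of $K_Y+\Delta_Y$ as in the paper; these are the same contradiction.
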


\begin{proof}
We do a dlt blow-up by Theorem \ref{dlt} and we can assume our $(X,\Delta+F)$ is dlt and $X$ is $\mathbb{Q}$-factorial since nefness, semi-ampleness and Kodaira dimension do not change by surjective pullback. Then we run a $(K_X+\Delta)$-LMMP with scaling of $F$ and we know it terminates by assumption. We get a sequence of $\lambda_i$. If one of the $\lambda_i$ is strictly smaller than 1, then we pick the $\lambda_k<1$ such that $k$ is the smallest index. If $k=0$, we are done. Otherwise $\lambda_0=...=\lambda_{k-1}=1$ and all the contractions satisfy that

\begin{center}
$(K_{X_{i}}+\Delta_{i}+F_{i})\cdot C_{i}=0$ for $0\leq i<k$.
\end{center}
$X=X_0$, $\Delta=\Delta_0$ and $F=F_0$. We claim this equality implies that semi-ampleness and Kodaira dimension of $K_{X_{i}}+\Delta_{i}+F_{i}$ do not change during this LMMP with scaling. If we have a divisorial contraction $f: X_i \to X_{i+1}$, then
\begin{center}
$K_{X_i}+\Delta_i+F_{i}\sim_{\mathbb{Q}}f^*(K_{X_{i+1}}+\Delta_{i+1}+F_{i+1})+aE$
\end{center}
for some $a$ by the cone theorem. Here $E$ is the exceptional divisor of $f$. Since $E\cdot C_i<0$ and $(K_{X_{i}}+\Delta_{i}+F_{i})\cdot C_{i}=0$, we have $a=0$ and
\begin{center}
$K_{X_i}+\Delta_i+F_{i}\sim_{\mathbb{Q}}f^*(K_{X_{i+1}}+\Delta_{i+1}+F_{i+1})$.
\end{center}
We know semi-ampleness, Kodaira dimension and nefness do not change by surjective pullback. Our claim has been proven. If $f$ is a log flip, the proof is almost same. We omit it. By our discussion so far, we know that if such a $\lambda_k<1$ exists, then we have proven the lemma and it is in the first case of property $4$. If $\lambda_i=1$ for every $i$, then by the termination of LMMP with scaling, we know this program terminates at a Mori fibre space $g: X_i\to Z$ for some index $i$. By $(K_{X_{i}}+\Delta_{i}+F_{i})\cdot C_{i}=0$, we know $K_{X_{i}}+\Delta_{i}+F_{i}$ is a pullback from $Z$. We only need to prove that some irreducible component of $F_i$ dominates $Z$. Assume not, $K_{X_{i}}+\Delta_{i}+F_{i}$ coincides with $K_{X_{i}}+\Delta_{i}$ after restricting to a general fibre of $g$. This is absurd since $K_{X_{i}}+\Delta_{i}+F_{i}$ is nef and $K_{X_{i}}+\Delta_{i}$ is relatively anti-ample over $Z$.
\end{proof}

\begin{lemma}\label{2}
Assume the log abundance conjecture in dimension $n-1$. Let $(X,\Delta+F)$ be a $\mathbb{Q}$-factorial dlt pair of dimension $n$, and $f: X\to Z$ a Mori fibre space. Assume that $K_X+\Delta+F$ is nef, $F=\llcorner \Delta+F \lrcorner$ and $(X,\Delta+F)$ is the pullback of a $\mathbb{Q}$-Cartier $\mathbb{Q}$-divisor $D$ over $Z$. If an irreducible component $S$ of $F$ dominates $Z$, then $K_X+\Delta+F$ is semi-ample.
\end{lemma}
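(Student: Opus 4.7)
The plan is to restrict $K_X+\Delta+F$ to the divisor $S$ and apply the assumed log abundance in dimension $n-1$. Since $(X,\Delta+F)$ is dlt and $S$ is an irreducible component of the reduced part $F=\llcorner \Delta+F\lrcorner$, $S$ is a log canonical center of $(X,\Delta+F)$. Dlt adjunction (Theorem \ref{dltad}) then produces an effective $\mathbb{Q}$-divisor $\Delta_S$ on $S$ such that $(S,\Delta_S)$ is dlt of dimension $n-1$ and
\[
(K_X+\Delta+F)|_S \sim_{\mathbb{Q}} K_S+\Delta_S.
\]
Because $K_X+\Delta+F$ is nef on $X$, its restriction and hence $K_S+\Delta_S$ is nef on $S$. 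The log abundance conjecture in dimension $n-1$ now applies and gives that $K_S+\Delta_S$ is semi-ample.

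Next I would descend this semi-ampleness from $S$ to $Z$ and then pull it back to $X$. Since $S$ dominates $Z$, the restriction $g:=f|_S\colon S\to Z$ is surjective, and
\[
g^*D \sim_{\mathbb{Q}} (f^*D)|_S = (K_X+\Delta+F)|_S \sim_{\mathbb{Q}} K_S+\Delta_S
\]
is semi-ample. Take the Stein factorization $g=\sigma\circ\tau$ with $\tau\colon S\to Z'$ an algebraic fibre space and $\sigma\colon Z'\to Z$ finite (note $Z$ is normal since $f$ is a Mori fibre space). Using $\tau_*\mathcal{O}_S=\mathcal{O}_{Z'}$ and the projection formula, one has $H^0(S,m\tau^*\sigma^*D)=H^0(Z',m\sigma^*D)$ for every $m$, so base-point-freeness of a sufficiently divisible multiple of $g^*D=\tau^*\sigma^*D$ transfers to $\sigma^*D$; hence $\sigma^*D$ is semi-ample on $Z'$. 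The standard descent of semi-ampleness along finite surjective morphisms between normal projective varieties (e.g.\ via passing to a Galois closure and averaging sections) then shows that $D$ itself is semi-ample on $Z$, so $K_X+\Delta+F=f^*D$ is semi-ample, as required.

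The core of the argument is a straightforward adjunction-plus-induction idea, so the main technical point is to verify that dlt adjunction is available in exactly the form we need: that each component of the reduced boundary $F$ of a $\mathbb{Q}$-factorial dlt pair really is a log canonical center and that the boundary $\Delta_S$ produced on $S$ keeps $(S,\Delta_S)$ dlt, so that the dimension-$(n-1)$ log abundance hypothesis genuinely applies. Once this is in place, the remaining work is the descent of semi-ampleness from $S$ through $g=f|_S$ to $Z$, which is routine via Stein factorization; neither the Mori fibre structure of $f$ nor $\mathbb{Q}$-factoriality of $X$ plays any further role beyond ensuring that $f^*D$ makes sense and that $f_*\mathcal{O}_X=\mathcal{O}_Z$.
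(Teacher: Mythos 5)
Your proposal is correct and follows essentially the same route as the paper: dlt adjunction on the component $S$, the dimension-$(n-1)$ log abundance hypothesis to get $K_S+\Delta_S$ semi-ample, and descent of semi-ampleness along the surjection $f|_S\colon S\to Z$ to conclude that $D$, and hence $K_X+\Delta+F=f^*D$, is semi-ample. The only difference is that you spell out the (standard) descent step via Stein factorization and finite covers, which the paper treats as immediate.
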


\begin{proof}
Since $S$ is an irreducible component of $F$, $S$ is a log canonical center of $(X,\Delta+F)$. By Theorem \ref{dltad}, we know there exists $\Delta_S$ such that $(S, \Delta_S)$ is dlt and $\rm dim$\,$S=n-1$. From $(K_X+\Delta+F)|_{S}\sim_{\mathbb{Q}}K_S+\Delta_S$, we have that $K_S+\Delta_S$ is nef on $S$. By assumption, we have that $K_S+\Delta_S$ is semi-ample. If we restrict $f$ to $S$, $f|_{S}$ is still surjective by assumption and $K_S+\Delta_S\sim_{\mathbb{Q}}(f|_{S})^*D$. It implies $D$ is semi-ample and then $K_X+\Delta+F$ is semi-ample since $K_X+\Delta+F$ is the pullback of $D$.
\end{proof}

Next lemma is \cite[Lemma 7.3]{KMM94}. It is an instance of Koll$\mathrm{\acute{a}}$r's injectivity theorem.

\begin{lemma}
Let $(X,\Delta)$ be a klt pair of dimension $n$, and $L$ an integral $\mathbb{Q}$-Cartier divisor such that $L-(K_X+\Delta)$ is semi-ample. Suppose $D$ and $D'$ are effective integral $\mathbb{Q}$-Cartier divisors such that $D+D'\in |m(L-(K_X+\Delta))|$ for some $m$. Then the homomorphisms induced by multiplication by $D$,
\begin{center}
$\phi^i_D : H^i(X,\mathcal{O}_X(L)) \to H^i(X,\mathcal{O}_X(L+D))$
\end{center}
are all injective.
\end{lemma}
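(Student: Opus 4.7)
The plan is to reduce this to the classical Kollár injectivity theorem on a smooth model via a log resolution, then transfer the conclusion back to $X$ using rational singularities. First I would take a log resolution $f \colon Y \to X$ of $(X, \Delta + D + D')$, so that all relevant divisors are in simple normal crossings position, and use the klt hypothesis to write
\[
K_Y + \Gamma = f^*(K_X + \Delta) + E,
\]
with $\Gamma \geq 0$ of SNC support satisfying $\lfloor \Gamma \rfloor = 0$, and $E \geq 0$ an $f$-exceptional $\mathbb{Q}$-divisor. Setting $L_Y := f^*L + \lceil E \rceil$ and $\Gamma' := \Gamma + (\lceil E \rceil - E)$, a short calculation gives
\[
L_Y - (K_Y + \Gamma') = f^*\bigl(L - (K_X + \Delta)\bigr),
\]
which is semi-ample on $Y$, while $(Y, \Gamma')$ remains SNC klt.

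Next, I would combine the Leray spectral sequence with the vanishings $R^i f_* \mathcal{O}_Y(\lceil E \rceil) = 0$ for $i > 0$ (a consequence of $\lceil E \rceil$ being $f$-exceptional effective on a log resolution of a klt pair, so in particular $X$ has rational singularities) and $f_* \mathcal{O}_Y(\lceil E \rceil) = \mathcal{O}_X$, to obtain natural isomorphisms
\[
H^i(X, \mathcal{O}_X(L)) \cong H^i(Y, \mathcal{O}_Y(L_Y)), \quad H^i(X, \mathcal{O}_X(L + D)) \cong H^i(Y, \mathcal{O}_Y(L_Y + f^*D)),
\]
compatible with the multiplication-by-$D$ maps (the latter becoming multiplication by the canonical section of $\mathcal{O}_Y(f^*D)$). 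Now on $Y$ everything is in the shape required for Kollár's classical injectivity theorem: $(Y, \Gamma')$ is a smooth SNC klt pair, $L_Y - (K_Y + \Gamma')$ is semi-ample, and pulling back $D + D' \in |m(L - K_X - \Delta)|$ produces $f^*D + f^*D' \in |m(L_Y - K_Y - \Gamma')|$, with $f^*D$ and $f^*D'$ effective Cartier divisors in SNC position. Hence the multiplication map $H^i(Y, L_Y) \to H^i(Y, L_Y + f^*D)$ is injective for every $i$, and composing with the Leray identification yields the desired injectivity of $\phi_D^i$ on $X$.

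The hard part will be justifying the relative vanishing $R^i f_* \mathcal{O}_Y(\lceil E \rceil) = 0$ cleanly, since the difference $\lceil E \rceil - K_Y - \Gamma' = -f^*(K_X + \Delta)$ is only $f$-numerically trivial rather than $f$-big, so the usual relative Kawamata-Viehweg vanishing does not apply verbatim. I would handle this by invoking its strengthened form valid for birational morphisms of klt varieties, exploiting that the fractional exceptional correction $\lceil E \rceil - E$ has support contained in the $f$-exceptional locus and contributes the additional positivity needed to force the vanishing through.
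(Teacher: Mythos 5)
The paper itself gives no argument for this lemma (it is quoted from [KMM94, Lemma 7.3] with the remark that it is an instance of Koll\'ar's injectivity theorem), so your reduction-to-a-log-resolution strategy is indeed the intended one, and most of it is sound. One reassurance first: the step you single out as ``the hard part'' is not actually problematic. For a \emph{birational} morphism $f$, relative bigness is tested on the generic fibre, which is a point, so the $f$-numerically trivial divisor $-f^*(K_X+\Delta)$ is automatically $f$-nef and $f$-big; hence relative Kawamata--Viehweg vanishing applies verbatim and gives $R^if_*\mathcal{O}_Y(\lceil E\rceil)=0$. This is exactly the standard proof that klt singularities are rational, so no strengthened form is needed there.

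The genuine gap is earlier and concerns the hypotheses: $L$, $D$, $D'$ are only assumed to be \emph{integral $\mathbb{Q}$-Cartier} divisors, not Cartier. Consequently $f^*L$ and $f^*D$ are in general $\mathbb{Q}$-divisors with fractional coefficients along the exceptional divisors, so $L_Y:=f^*L+\lceil E\rceil$ need not be integral, $\mathcal{O}_Y(f^*D)$ and ``the canonical section of $\mathcal{O}_Y(f^*D)$'' are not defined, and the projection-formula identifications $H^i(X,\mathcal{O}_X(L))\cong H^i(Y,\mathcal{O}_Y(L_Y))$, $H^i(X,\mathcal{O}_X(L+D))\cong H^i(Y,\mathcal{O}_Y(L_Y+f^*D))$ do not make sense as written. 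Moreover the obvious repair is not automatic: if you replace $L_Y$ by $\lceil f^*L+E\rceil$, the resulting adjoint decomposition has boundary $\Gamma+(\lceil f^*L+E\rceil-(f^*L+E))$, and along an exceptional divisor where $\Gamma$ has coefficient close to $1$ (discrepancy close to $-1$) and $f^*L$ has fractional multiplicity, this boundary can have coefficient $\geq 1$, destroying both the klt hypothesis needed for the injectivity theorem on $Y$ and the one needed for the relative vanishing. A correct treatment has to choose the exceptional correction adapted to the discrepancies: writing $K_Y=f^*(K_X+\Delta)+\sum a_iE_i$ and $\ell_i=\mathrm{mult}_{E_i}f^*L$, take $G=\sum g_iE_i$ with $g_i$ the unique representative of $-\ell_i \pmod{\mathbb{Z}}$ lying in $[a_i,a_i+1)$; then $N:=f^*L+G$ is integral, $f_*\mathcal{O}_Y(N)=\mathcal{O}_X(L)$, $R^if_*\mathcal{O}_Y(N)=0$, and $N-f^*(L-K_X-\Delta)-K_Y$ is an SNC boundary with coefficients in $[0,1)$; one must then do the same for $L+D$ and check that the difference of the two integral models is an effective divisor inducing the multiplication-by-$D$ map, so that the injectivity theorem of Esnault--Viehweg/Koll\'ar on $Y$ descends to the asserted injectivity of $\phi^i_D$. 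Without this bookkeeping (or an alternative device such as reducing to the Cartier case), your argument only proves the lemma under the stronger assumption that $L$ and $D$ are Cartier.
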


\begin{lemma}\label{3}
Let $(X,\Delta+F)$ be a $\mathbb{Q}$-factorial dlt pair of dimension $n$, and $F=\llcorner \Delta+F \lrcorner$. Assume that $K_{X}+\Delta+(1-\epsilon)F$ is nef for every non-negative small enough rational number $\epsilon$. We have the following:

1. Assuming the non-vanishing conjecture for klt pairs in dimension $n$, then $\kappa(K_X+\Delta+F)\geq0$.

2. Assuming the log abundance conjecture in dimension $n-1$ and the log abundance conjecture for klt pairs in dimension $n$, then $K_X+\Delta+F$ is semi-ample.
\end{lemma}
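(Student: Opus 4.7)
My plan rests on a common reduction for both parts. Because $F = \lfloor \Delta + F \rfloor$, every coefficient of $\Delta$ lies in $[0,1)$, so $(X, \Delta + (1-\epsilon)F)$ is klt for every small rational $\epsilon > 0$, and by hypothesis $L_\epsilon := K_X + \Delta + (1-\epsilon)F$ is a nef $\mathbb{Q}$-Cartier divisor on a klt pair of dimension $n$. Part 1 is then immediate: klt non-vanishing produces an effective $D \sim_\mathbb{Q} L_\epsilon$, and adding $\epsilon F$ gives an effective representative of $K_X + \Delta + F$, so $\kappa(K_X + \Delta + F) \geq 0$.

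For Part 2, the same reduction combined with klt log abundance in dimension $n$ gives that $L_\epsilon$ is semi-ample; let $\phi_\epsilon: X \to Y_\epsilon$ denote the induced contraction and set $L := K_X + \Delta + F = L_\epsilon + \epsilon F$. I will show that $|mL|$ is base-point free for $m$ sufficiently divisible in three steps. First, since $mL = mL_\epsilon + m\epsilon F$ and $|mL_\epsilon|$ is base-point free for $m$ sufficiently divisible, $\mathrm{Bs}\,|mL|$ is contained in $\mathrm{Supp}(F)$. Second, applying dlt adjunction (Theorem \ref{dltad}) to each prime component $F_j \subseteq F$ gives $L|_{F_j} \sim_\mathbb{Q} K_{F_j} + \Delta_{F_j}$ with $(F_j, \Delta_{F_j})$ dlt of dimension $n-1$ and nef; log abundance in dimension $n-1$ then yields semi-ampleness, so $mL|_F$ is base-point free for $m$ suitably divisible. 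Third, from the restriction sequence
\begin{align*}
0 \to \mathcal{O}_X(mL - F) \to \mathcal{O}_X(mL) \to \mathcal{O}_F(mL|_F) \to 0,
\end{align*}
I will lift sections by showing the restriction map $H^0(X, mL) \to H^0(F, mL|_F)$ is surjective. Combined with the first two steps, this forces $\mathrm{Bs}\,|mL| = \emptyset$ and hence semi-ampleness of $L$.

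The third step is the main obstacle. To kill the connecting map via Kollár's injectivity (the lemma directly preceding), one needs the difference $(mL - F) - (K_X + \Delta) = (m-1)L$ to be semi-ample, which is precisely what we are trying to prove. The workaround is a $\mathbb{Q}$-perturbation: set $\Delta' := \Delta + (m-1)\epsilon F$, still klt provided $(m-1)\epsilon < 1$, so that $(mL - F) - (K_X + \Delta') = (m-1)L_\epsilon$ is semi-ample. Applying Kollár's injectivity to the perturbed pair $(X, \Delta')$ with this semi-ample difference, and letting the perturbation parameters tend to zero, should deliver the desired surjectivity. The delicate points I anticipate are the integrality constraints (divisibility of $m$, $m\epsilon \in \mathbb{Z}$), the production of an auxiliary effective divisor $F + D' \in |k(m-1)L_\epsilon|$ required by the injectivity hypothesis (which needs separate treatment when $L_\epsilon$ is not big, perhaps via a generalized Ambro--Fujino-type vanishing for lc pairs), and the bookkeeping needed to ensure the lifted sections genuinely give base-point freeness after passage to the limit.
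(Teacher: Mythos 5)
Part 1 of your proposal is fine and is exactly the paper's (one-line) argument: $(X,\Delta+(1-\epsilon)F)$ is klt, klt non-vanishing gives an effective divisor $\mathbb{Q}$-linearly equivalent to $K_X+\Delta+(1-\epsilon)F$, and adding $\epsilon F$ concludes.

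In Part 2 there is a genuine gap at your second step. The divisor $F=\llcorner\Delta+F\lrcorner$ is in general reducible and non-normal, and your plan needs base-point freeness of $|mL|_F|$ on the whole scheme $F$ (that is what feeds into the restriction sequence $0\to\mathcal{O}_X(mL-F)\to\mathcal{O}_X(mL)\to\mathcal{O}_F(mL|_F)\to 0$): to kill a base point $x\in F$ you must produce a section of $mL|_F$ on all of $F$ not vanishing at $x$, and only then lift it. Applying dlt adjunction (Theorem \ref{dltad}) to each prime component $F_j$ and invoking log abundance in dimension $n-1$ only gives semi-ampleness of $L|_{F_j}$ component by component; sections on the normalized components need not glue along the double locus, so this does not yield base-point freeness of $|mL|_F|$. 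Trying to dodge the gluing by restricting to a single $F_j$ (using $0\to\mathcal{O}_X(mL-F_j)\to\mathcal{O}_X(mL)\to\mathcal{O}_{F_j}(mL)\to 0$) fails as well, since the injectivity lemma then requires semi-ampleness of a divisor of the shape $(m-1)L-(1-\delta)F_j+(\text{effective supported on the other }F_i)$, which is not available. What is really needed is abundance for the semi-log canonical pair that adjunction puts on the non-normal divisor $F$; this is precisely why the paper, following the proof of Corollary 7.4 of Keel--Matsuki--M\textsuperscript{c}Kernan, invokes the Fujino--Gongyo equivalence of log abundance and semi-log abundance in dimension $n-1$ to convert the hypothesis into slc abundance on $F$.

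Your third step (the perturbation and Koll\'ar injectivity) is in substance the KMM argument the paper cites, and the ``delicate point'' you flag there is actually harmless: for the injectivity lemma applied to the klt pair $(X,\Delta+(1-\epsilon)F)$ one needs $F+D'\in|k(m-1)(K_X+\Delta+(1-\epsilon')F)|$ for some effective $D'$, and since $k(m-1)\bigl(K_X+\Delta+(1-\epsilon')F\bigr)-F$ is again a positive multiple of some $K_X+\Delta+(1-\epsilon'')F$ with $\epsilon''$ still in the allowed range, klt abundance already provides effective members after passing to a further multiple; no bigness or Ambro--Fujino-type vanishing is required. So the perturbation bookkeeping can be made to work, but the proof is incomplete until the componentwise use of abundance on $F$ is replaced by semi-log abundance for the induced slc structure on $F$.
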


\begin{proof}
The first statement is obvious. The proof of the second statement is the same as the proof of \cite[Corollary 7.4]{KMM94} because of our assumptions and the equivalence between the semi-log abundance conjecture and the log abundance conjecture in every dimension due to Fujino and Gongyo \cite[Theorem 4.3]{FG14}.
\end{proof}

We have finished the setup we need, and next we will prove the main result.

\begin{thm}\label{Uni}
Assume the existence of good minimal models for klt pairs in dimensions up to $n-1$, the log abundance conjecture in dimension $n-1$ and the termination with scaling for $\mathbb{Q}$-factorial dlt pairs in dimension $n$. Let $(X,\Delta)$ be a log canonical pair of dimension $n$ with $X$ uniruled. If $K_X+\Delta$ is nef, then $\kappa(K_X+\Delta)\geq0$.
\end{thm}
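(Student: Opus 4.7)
I would apply Lemmas \ref{1} and \ref{2} to reduce the problem to a klt pair on a uniruled variety, then extract a Mori fibre space from a $K_Y$-LMMP with scaling and conclude via Proposition \ref{AFS1}. First, write $\Delta = \Delta' + F$ with $F = \llcorner \Delta \lrcorner$ and $\Delta' = \Delta - F$, so that Lemma \ref{1} applies to $(X, \Delta' + F)$ and produces a $\mathbb{Q}$-factorial dlt pair $(Y, \Delta_Y + F_Y)$ of dimension $n$ with $K_Y + \Delta_Y + F_Y$ nef and $\kappa(K_X+\Delta) = \kappa(K_Y + \Delta_Y + F_Y)$, falling into one of two cases. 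In the Mori fibre space case some irreducible component of $F_Y$ dominates the base, and Lemma \ref{2} (using log abundance in dimension $n-1$) gives semi-ampleness of $K_Y + \Delta_Y + F_Y$, hence non-vanishing. So the substantive case is the second: $K_Y + \Delta_Y + (1-\epsilon) F_Y$ is nef for every small enough $\epsilon \geq 0$. Fixing such an $\epsilon > 0$, the pair $(Y, \Theta)$ with $\Theta := \Delta_Y + (1-\epsilon) F_Y$ is $\mathbb{Q}$-factorial klt with $K_Y + \Theta$ nef, and $Y$ is uniruled since $X$ is.

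Because $Y$ is uniruled, $K_Y$ is not pseudo-effective, so I would run a $K_Y$-LMMP with scaling of $\Theta$ on the $\mathbb{Q}$-factorial dlt pair $(Y, \Theta)$. This LMMP terminates by the termination-with-scaling assumption in dimension $n$, and since uniruledness of each $Y_i$ is preserved along the MMP, it cannot end at a minimal model for $K$ and must end at a Mori fibre space $g\colon Y_k \to Z$. The final scaling value $\lambda_k$ lies in $(0, 1]$, and $K_{Y_k} + \lambda_k \Theta_k$ is nef and $g$-numerically trivial, hence pulled back from $Z$. When $\dim Z \geq 1$, Proposition \ref{AFS1} applied to $(Y_k, \lambda_k \Theta_k)$ yields that $K_{Y_k} + \lambda_k \Theta_k$ is semi-ample; when $\dim Z = 0$, $Y_k$ is a klt Fano variety on which $K_{Y_k} + \lambda_k \Theta_k \equiv 0$, and since $q(Y_k) = 0$ numerical triviality upgrades to $\mathbb{Q}$-linear triviality. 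In either situation, $\kappa(K_{Y_k} + \lambda_k \Theta_k) \geq 0$.

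The hard part will be transferring this non-vanishing back from $Y_k$ up to $Y$, since the LMMP was run on $K_Y$ rather than $K_Y + \Theta$ and so need not preserve $\kappa(K_Y + \Theta)$ a priori. I would handle this by tracking the specific divisor $K + \lambda_k \Theta$ along the MMP. At each divisorial contraction $\pi_i\colon Y_i \to Y_{i+1}$ with extremal curve $C_i$, the scaling relation $(K_{Y_i} + \lambda_i \Theta_i) \cdot C_i = 0$ combined with $\lambda_i \geq \lambda_k$ gives $(K_{Y_i} + \lambda_k \Theta_i) \cdot C_i \leq 0$; since $E_i \cdot C_i < 0$ for the exceptional divisor, the coefficient $a_i$ in $K_{Y_i} + \lambda_k \Theta_i = \pi_i^*(K_{Y_{i+1}} + \lambda_k \Theta_{i+1}) + a_i E_i$ must be non-negative, so $\kappa(K_{Y_i} + \lambda_k \Theta_i) \geq \kappa(K_{Y_{i+1}} + \lambda_k \Theta_{i+1})$; flips preserve Kodaira dimension. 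Iterating and then absorbing the effective divisors $(1-\lambda_k)\Theta$ and $\epsilon F_Y$ yields the chain
\[
\kappa(K_X + \Delta) = \kappa(K_Y + \Delta_Y + F_Y) \geq \kappa(K_Y + \Theta) \geq \kappa(K_Y + \lambda_k \Theta) \geq \kappa(K_{Y_k} + \lambda_k \Theta_k) \geq 0,
\]
which completes the argument.
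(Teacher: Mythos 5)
Your overall strategy mirrors the paper's: reduce via Lemma \ref{1} and Lemma \ref{2} to a klt pair with nef log canonical divisor on a uniruled variety, run a $K$-MMP with scaling of the boundary, end at a Mori fibre space, and conclude with Proposition \ref{AFS1} (plus the numerically trivial case when the base is a point). The bookkeeping you do with the fixed threshold $\lambda_k$ to transfer $\kappa\geq 0$ back up the MMP is also essentially what the paper does. However, there is one genuine gap: you assert that ``because $Y$ is uniruled, $K_Y$ is not pseudo-effective,'' and again later that uniruledness of the $Y_i$ rules out termination at a minimal model for $K$. At that point $Y$ is only known to have klt ($\mathbb{Q}$-factorial) singularities, and for klt varieties this implication is simply false: there exist rational (hence uniruled) surfaces with quotient singularities whose canonical divisor is ample (for instance the klt rational surfaces with ample $K$ used in Lee--Park type constructions of surfaces of general type via $\mathbb{Q}$-Gorenstein smoothings). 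The equivalence ``uniruled $\Leftrightarrow$ $K$ not pseudo-effective'' (BDPP) is a statement about smooth varieties, and it descends only to varieties whose discrepancies are nonnegative, i.e.\ canonical or terminal singularities, where $K_{\widetilde Y}=\mu^*K_Y+E$ with $E\geq 0$ on a resolution.

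This is exactly why the paper inserts a terminalization step before running the $K$-MMP: apply Theorem \ref{TerM} to the klt pair $(Y,\Theta)$ to get a $\mathbb{Q}$-factorial terminal pair $(Y',\Theta')$ with $K_{Y'}+\Theta'\sim_{\mathbb{Q}}\pi^*(K_Y+\Theta)$, which preserves uniruledness, nefness and the relevant Kodaira dimensions. On $Y'$, uniruledness does imply that $K_{Y'}$ is not pseudo-effective, and since a $K$-MMP preserves terminality, the same argument rules out a minimal model at every later stage, forcing termination at a Mori fibre space. With this repair (and noting, as you do, that Lemma \ref{1}(3) gives only the equivalence of $\kappa\geq 0$, which suffices), your argument goes through and coincides with the paper's proof.
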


\begin{proof}
The first step is to take a dlt blow-up of our pair using Theorem \ref{dlt}. Then we have a new pair $(Y,\Delta_Y)$ such that $Y$ is $\mathbb{Q}$-factorial, $(Y,\Delta_Y)$ is dlt and $K_Y+\Delta_Y\sim_{\mathbb{Q}}f^*(K_X+\Delta)$. The last statement implies
\begin{center}
$\kappa(K_Y+\Delta_Y)=\kappa(K_X+\Delta)$
\end{center}
and $K_Y+\Delta_Y$ is nef. So we can assume $X$ is $\mathbb{Q}$-factorial and $(X,\Delta)$ is dlt without changing our problem.

Next we claim that we can reduce to terminal singularities. By Lemma \ref{1}, Lemma \ref{2} and Lemma \ref{3} combined with the fact that uniruledness is a birational invariant, we can assume $(X,\Delta)$ is klt. Then we take a terminalization of $(X,\Delta)$ using Theorem \ref{TerM}.

In summary, to prove our theorem, we only need to prove $\kappa(K_X+\Delta)\geq0$ when $(X,\Delta)$ is a terminal pair such that $X$ is uniruled and $\mathbb{Q}$-factorial with $K_X+\Delta$ nef. Since $X$ is uniruled and terminal, $K_X$ is not pseudo-effective. In particular, $K_X$ is not nef. Then we can run a $K_X$-MMP with scaling of $\Delta$ by assumption. We claim it must terminate at a Mori fibre space. If not, it terminates at a minimal model $Y$ such that $K_Y$ is nef. $Y$ is uniruled and terminal since we are running $K_X$-MMP with scaling. Thus $K_Y$ is not pseudo-effective, a contradiction.

So our $K_X$-MMP with scaling of $\Delta$ terminates at a Mori fibre space $f: Y \to Z$. We have that $Y$ is $\mathbb{Q}$-factorial and terminal. Let $\Delta_Y$ be the strict transform of $\Delta$. There exists a $0<\lambda\leq1$ such that $K_Y+\lambda\Delta_Y$ is nef, a pullback from $Z$,
\begin{center}
$\kappa(K_X+\Delta)\geq\kappa(K_Y+\lambda\Delta_Y)$ ($\kappa(K_{X_i}+\lambda_i\Delta_i)\geq\kappa(K_{X_{i+1}}+\lambda_{i+1}\Delta_{i+1})$)
\end{center}
and $(Y, \lambda\Delta_Y)$ is klt by the process of the MMP with scaling. If $\rm dim$\,$Z=0$, then
\begin{center}
$\kappa(K_X+\Delta)\geq\kappa(K_Y+\lambda\Delta_Y)=0$
\end{center}
and we are done. We assume $\rm dim$\,$Z>0$. In this case, $K_Y+\lambda\Delta_Y$ is not big over $Z$ since every fibre has positive dimension and $K_Y+\lambda\Delta_Y$ is a pullback from $Z$ which means it is trivial on the fibre. Combining with our hypothesis, all the assumptions in Proposition \ref{AFS1} are satisfied. We conclude $K_Y+\lambda\Delta_Y$ is semi-ample and thus $\kappa(K_Y+\lambda\Delta_Y)\geq0$. It implies $\kappa(K_X+\Delta)\geq0$.
\end{proof}

\begin{coro}\label{Uni4}
Let $(X,\Delta)$ be a log canonical pair such that $X$ is a uniruled $4$-fold. If $K_X+\Delta$ is pseudo-effective, then $\kappa(K_X+\Delta)\geq0$.
\end{coro}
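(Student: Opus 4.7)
The plan is to reduce the pseudo-effective case to the nef case already established by Theorem~\ref{Uni} (which, as observed in the paper, holds unconditionally in dimension~$4$) by producing a minimal model through a $(K_X+\Delta)$-MMP, for which termination is guaranteed in dimension~$4$ by Theorem~\ref{TerS}, and more generally by Theorem~\ref{Ter}.

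First I would take a $\mathbb{Q}$-factorial dlt model $f:(Y,\Delta_Y)\to (X,\Delta)$ using Theorem~\ref{dlt}. Because $K_Y+\Delta_Y\sim_{\mathbb{Q}} f^*(K_X+\Delta)$, pseudo-effectivity and the Kodaira dimension are both preserved, and uniruledness is a birational invariant in characteristic zero, so $Y$ is still a uniruled $4$-fold. Thus we may assume from the start that $(X,\Delta)$ is $\mathbb{Q}$-factorial and dlt.

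Next I would pick a sufficiently general ample $\mathbb{Q}$-divisor $H$ on $X$ so that $(X,\Delta+H)$ is still dlt and $K_X+\Delta+H$ is nef, and then run the $(K_X+\Delta)$-MMP with scaling of $H$ in the sense of Definition~\ref{LMMPS}. By Theorem~\ref{TerS} this program terminates (alternatively one invokes Theorem~\ref{Ter} for termination of flips together with the trivial termination of divisorial contractions via the Picard number). Since $K_X+\Delta$ is pseudo-effective, the terminal output cannot be a Mori fiber space: on a general fiber $F$ of such a fibration the restriction $(K_X+\Delta)|_F$ would be anti-ample, contradicting pseudo-effectivity. Therefore the MMP terminates with a minimal model $g: X\dashrightarrow (Y,\Delta_Y)$ where $K_Y+\Delta_Y$ is nef, $(Y,\Delta_Y)$ is $\mathbb{Q}$-factorial dlt, and $Y$ is still uniruled.

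Finally I would apply Theorem~\ref{Uni} to $(Y,\Delta_Y)$ to conclude $\kappa(K_Y+\Delta_Y)\geq 0$, and then use the standard fact that each step of a $(K_X+\Delta)$-MMP for a dlt pair preserves $\kappa(K+\Delta)$, so that $\kappa(K_X+\Delta)=\kappa(K_Y+\Delta_Y)\geq 0$. The only nontrivial ingredients are therefore the two fourfold termination results, and the nef-case theorem just proved; everything else is bookkeeping about the birational invariance of uniruledness and Kodaira dimension. The main potential obstacle is ensuring one actually lands at a minimal model rather than at a Mori fiber space, but this is exactly where pseudo-effectivity is used to rule out the latter possibility.
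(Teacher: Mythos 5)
Your proposal is correct and takes essentially the same route as the paper: run a $(K_X+\Delta)$-MMP in dimension $4$ (termination via Theorem \ref{TerS}/Theorem \ref{Ter}), use pseudo-effectivity to rule out ending at a Mori fibre space, and apply Theorem \ref{Uni} to the resulting minimal model, with Kodaira dimension preserved at each step. The only point the paper treats more carefully is your unproved assertion that pseudo-effectivity of $K+\Delta$ (and hence of its restriction to a general fibre of a would-be Mori fibre space) survives the MMP steps, which the paper establishes by an explicit approximation-by-big-divisors argument for divisorial contractions and flips.
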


\begin{proof}
We do a reduction step, which is likely well known. For completeness, we give a detailed proof here. We claim we can assume $K_X+\Delta$ is nef. If it is true, our corollary follows from Theorem \ref{Uni} since we know the assumptions in Theorem \ref{Uni} are true when $n=4$ by Theorem \ref{TerS}, the existence of good minimal models for klt pairs in dimensions up to $3$ and the log abundance conjecture in dimension $3$. Since $K_X+\Delta$ is pseudo-effective, we can run a log minimal model program for $K_X+\Delta$, and it will terminate by Theorem \ref{Ter}. Let $f: X_i \dashrightarrow X_{i+1}$ be a divisorial contraction or a log flip in the LMMP process and $\Delta_i$ be the strict transform of $\Delta$ on $X_i$. From the LMMP, we know that in each step
\begin{center}
$\kappa(K_{X_i}+\Delta_i)=\kappa(K_{X_{i+1}}+\Delta_{i+1})$.
\end{center}
If this LMMP terminates at a minimal model $(Y,\Delta_Y)$ of $(X,\Delta)$, then $K_Y+\Delta_Y$ is nef and we can reduce to the case when $K_X+\Delta$ is nef. If this LMMP terminates at a Mori fibre space $f: (Y,\Delta_Y)\to Z$, then $K_Y+\Delta_Y$ is relative anti-ample over $Z$. However we claim pseudo-effectivity is preserved during the LMMP. If it is true, $K_Y+\Delta_Y$ is pseudo-effective and it is pseudo-effective on the general fibre of $f$ by a similar method in the proof of our claim below. We have that $K_Y+\Delta_Y$ is both pseudo-effective and anti-ample on a general fibre of dimension bigger than $0$ and this is a contradiction. So our LMMP will terminate at a minimal model and we can do the reduction step stated at the beginning.

Thus we only need to prove that pseudo-effectivity is preserved during the LMMP now. If we have a divisorial contraction $f: X_i \to X_{i+1}$, then
\begin{center}
$K_{X_i}+\Delta_i\sim_{\mathbb{Q}}f^*(K_{X_{i+1}}+\Delta_{i+1})+aE$
\end{center}
where $a>0$ since the extremal ray being contracted is $K_{X_i}+\Delta_i$ negative. Here $E$ is the exceptional divisor of $f$. Choose an ample divisor $D$ on $X_{i+1}$, then $f^*(\frac{D}{k})$ is big on $X_i$ for any positive integer $k$ since $f$ is birational. We deduce that $K_{X_i}+\Delta_i+f^*(\frac{D}{k})$ is big since $K_{X_i}+\Delta_i$ is pseudo-effective. We know $aE$ is effective and $f$-exceptional. We also have
\begin{center}
$f^*(K_{X_{i+1}}+\Delta_{i+1}+\frac{D}{k})+aE\sim_{\mathbb{Q}}K_{X_i}+\Delta_i+f^*(\frac{D}{k})$.
\end{center}
Thus we have
\begin{center}
$\rm dim$\,$X=\kappa(K_{X_i}+\Delta_i+f^*(\frac{D}{k}))=\kappa(f^*(K_{X_{i+1}}+\Delta_{i+1}+\frac{D}{k})+aE)=\kappa(f^*(K_{X_{i+1}}+\Delta_{i+1}+\frac{D}{k}))=\kappa(K_{X_{i+1}}+\Delta_{i+1}+\frac{D}{k})$
\end{center}
This implies that $K_{X_{i+1}}+\Delta_{i+1}+\frac{D}{k}$ is big, so $K_{X_{i+1}}+\Delta_{i+1}$ is a limit of big divisors, hence it is pseudo-effective. If we have a log flip $f: X_i \dashrightarrow X_{i+1}$, the proof is similar using the fact that sections do not change by removing a closed set of codimension at least 2 from a normal variety.
\end{proof}

Analyzing the proof of Theorem \ref{Uni} and Corollary \ref{Uni4}, uniruledness is only used to rule out the case when we get a minimal model of $(X,\Delta)$. So we get the following:

\begin{coro}
To prove the non-vanishing conjecture for log canonical pairs of dimension $4$, we only need to prove the non-vanishing conjecture for every terminal variety $X$ such that $K_X$ is nef and $X$ is of dimension $4$.
\end{coro}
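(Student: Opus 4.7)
The plan is to apply the reductions already established in the proofs of Corollary \ref{Uni4} and Theorem \ref{Uni} to an arbitrary log canonical 4-fold pair $(X,\Delta)$ with $K_X+\Delta$ pseudo-effective, and then to run a $K_X$-MMP with scaling of $\Delta$. In the proof of Theorem \ref{Uni} the uniruledness hypothesis is used only at the very end, to rule out that this $K_X$-MMP terminates at a minimal model; here we keep both possible outputs and invoke the hypothesis of the corollary precisely when the minimal-model output occurs.

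First I would reduce to the case where $K_X+\Delta$ is nef. Running a $(K_X+\Delta)$-LMMP, which terminates by Theorem \ref{Ter}, and using that pseudo-effectivity is preserved by the LMMP (as shown in the proof of Corollary \ref{Uni4}), the program must terminate at a minimal model, and the Kodaira dimension does not change. Next, the reduction chain from the proof of Theorem \ref{Uni}---a dlt blow-up via Theorem \ref{dlt}, then Lemmas \ref{1}, \ref{2}, \ref{3} to handle the $\llcorner\Delta\lrcorner$ part of the boundary, and finally a terminalization via Theorem \ref{TerM}---lets us assume without loss that $(X,\Delta)$ is a $\mathbb{Q}$-factorial terminal pair of dimension $4$ with $K_X+\Delta$ nef.

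I would then run a $K_X$-MMP with scaling of $\Delta$, which terminates by Theorem \ref{TerS}. As in the proof of Theorem \ref{Uni}, the scaling parameters $\lambda_i$ are non-increasing, and at each divisorial contraction or flip $K_{X_i}+\lambda_i\Delta_i$ pulls back compatibly; combined with $\Delta_{i+1}\geq 0$ this forces $\kappa(K_{X_i}+\lambda_i\Delta_i)$ to be non-increasing along the program, so $\kappa(K_X+\Delta)\geq\kappa(K_Y+\lambda\Delta_Y)$ for the final output $(Y,\Delta_Y,\lambda)$. If this program ends at a Mori fibre space $f\colon Y\to Z$, I would argue exactly as at the end of the proof of Theorem \ref{Uni}: $K_Y+\lambda\Delta_Y$ is the pullback of a $\mathbb{Q}$-Cartier $\mathbb{Q}$-divisor from $Z$, and so $\kappa\geq 0$ follows either trivially when $\dim Z=0$ or via Proposition \ref{AFS1} when $\dim Z\geq 1$.

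The only genuinely new case is when the $K_X$-MMP terminates at a minimal model $Y$ with $K_Y$ nef, which is the possibility excluded by uniruledness in Theorem \ref{Uni}. In this case $Y$ is $\mathbb{Q}$-factorial and terminal of dimension $4$ (terminality is preserved under any $K_X$-MMP) and $K_Y$ is nef, so the hypothesis of the corollary gives $\kappa(K_Y)\geq 0$. Since $\lambda\Delta_Y\geq 0$, this yields $\kappa(K_Y+\lambda\Delta_Y)\geq\kappa(K_Y)\geq 0$, and the Kodaira dimension comparison from the MMP with scaling then gives $\kappa(K_X+\Delta)\geq 0$. There is no real obstacle beyond the reductions already available in Theorem \ref{Uni} and Corollary \ref{Uni4}; the only point that requires a brief verification is the monotonicity of $\kappa(K_{X_i}+\lambda_i\Delta_i)$ along the $K_X$-MMP with scaling, which follows from the standard pullback argument at each step.
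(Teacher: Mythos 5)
Your proposal is correct and follows essentially the same route as the paper, which proves this corollary simply by observing that in the proofs of Theorem \ref{Uni} and Corollary \ref{Uni4} uniruledness is used only to exclude the case where the $K_X$-MMP with scaling of $\Delta$ ends at a minimal model; you handle that case exactly as intended, via the hypothesis together with $\kappa(K_Y+\lambda\Delta_Y)\geq\kappa(K_Y)\geq 0$ and the monotonicity $\kappa(K_{X_i}+\lambda_i\Delta_i)\geq\kappa(K_{X_{i+1}}+\lambda_{i+1}\Delta_{i+1})$. Your spelled-out verification of that monotonicity and of the preservation of terminality is exactly the content the paper leaves implicit, so there is nothing substantive to add or correct.
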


This corollary has already been obtained in dimension $4$, and in arbitrary dimensions with extra assumptions, in \cite[Section 8]{DHP}.

\section{Log abundance for 4-folds with positive irregularity}

In this section we prove Theorem \ref{qq}. Before we do this, let us state a structure theorem which follows almost verbatim from Kawamata's proof for \cite[Theorem 1]{Kaw81} about the algebraic fibre space structure for the Albanese morphism.

\begin{lemma}\label{Ab}
Let $X$ be a normal projective variety. Then one of the following is true:

1. Its Albanese morphism is an algebraic fibre space.

2. It has an $\acute{e}$tale cover $\bar{X}$ such that there exists an algebraic fibre space structure $\bar{X} \to W$ where $\rm dim$\,$W>0$ and $W$ is a normal projective variety of general type.
\end{lemma}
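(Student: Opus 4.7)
The plan is to follow Kawamata's argument in the proof of \cite[Theorem 1]{Kaw81}, adapting it to normal projective $X$ via the Albanese machinery of \cite{Wan16}. Set $A := \mathrm{Alb}(X)$, let $\alpha: X \to A$ be the Albanese morphism, and let $V := \alpha(X) \subset A$ denote its image. The main ingredient I would invoke is Ueno's structure theorem for closed subvarieties of abelian varieties: there exists an abelian subvariety $B \subset A$ such that $V$ is a union of translates of $B$, the image $V/B \subset A/B$ has trivial stabilizer, and $V/B$ is either a single point or a positive-dimensional projective variety of general type.

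In the case $\dim V/B > 0$, the composition $X \to V \to V/B$ is a surjective morphism onto a positive-dimensional general type variety. Its Stein factorization then produces an algebraic fibre space $X \to W$ with $W$ normal projective and finite surjective over $V/B$; such $W$ is itself of general type, since the pullback of a big canonical divisor under a generically finite surjective morphism is big modulo an effective ramification contribution. Taking $\bar X := X$ then gives conclusion $2$ of the lemma.

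If instead $\dim V/B = 0$, then $V = B$ (the translate of $B$ must pass through the origin of $A$, which lies in $V$ by choice of basepoint), and since the image of the Albanese generates $A$ as an algebraic group, we deduce $V = A$, so $\alpha$ is surjective. Writing the Stein factorization $\alpha = \gamma \circ \beta$ with $\beta: X \to W$ an algebraic fibre space and $\gamma: W \to A$ finite, if $\gamma$ is an isomorphism we are in conclusion $1$. Otherwise, Albanese functoriality applied to $\alpha = \gamma \circ \beta$ shows that $\mathrm{Alb}(\beta): A \to \mathrm{Alb}(W)$ has $\mathrm{Alb}(\gamma)$ as a left inverse, while it is surjective because $\beta$ is; hence it is an isomorphism, and $\mathrm{Alb}(W) = A$. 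I would then re-apply the Ueno dichotomy to the Albanese image of $W$ inside $\mathrm{Alb}(W) = A$: either this image is a proper subvariety, in which case the argument from the previous paragraph yields a general type fibration on $W$ and hence on $X$ via $\beta$; or it equals $A$, in which case I would pull back $\alpha$ along a suitable isogeny $A' \to A$ to produce an $\acute{e}$tale cover $\bar X := X \times_A A'$ of $X$ on which the Stein factorization of the induced map trivializes (or reduces to the previous subcase). The main obstacle I anticipate is precisely this last situation—where $\alpha$ is surjective but $\gamma$ is a non-trivial ramified cover—because the construction of the $\acute{e}$tale cover and the verification that it carries a general type fibration requires reproducing Kawamata's delicate Albanese-functoriality argument in the normal setting; the remainder of the proof should transfer essentially verbatim once the Albanese of normal projective varieties is set up as in \cite{Wan16}.
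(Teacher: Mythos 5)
Your first case ($\dim V/B>0$) and the reduction of the remaining case to a surjective Albanese morphism are fine and match the intended argument, but the proposal breaks down exactly at the one point where the lemma has real content: $\alpha$ surjective with disconnected fibres, i.e.\ the Stein factorization $\alpha=\gamma\circ\beta$ with $\gamma\colon W\to A$ finite of degree $\geq 2$, possibly ramified. Two problems occur there. First, the branch in which you ``re-apply the Ueno dichotomy to the Albanese image of $W$'' is vacuous: since $\gamma$ factors through the Albanese morphism of $W$ and is surjective onto $A$, which you have identified with $\mathrm{Alb}(W)$ of the same dimension, the Albanese image of $W$ is automatically all of $\mathrm{Alb}(W)$, so you always land in the subcase you admit you cannot handle. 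Second, the proposed fix there --- base change along an isogeny $A'\to A$ and hope that the Stein factorization of $X\times_A A'\to A'$ trivializes --- cannot work in general: if $\gamma$ is ramified (say $W\to A$ a double cover branched along a divisor), no isogeny pullback removes the ramification or reconnects the fibres, and $X\times_A A'$ need not even be an \'etale cover of $X$ in any useful way relative to the problem.

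The missing ingredient is the Kawamata--Viehweg characterization of abelian varieties, in the form of Theorem 13 of \cite{Kaw81}: for a normal projective variety $W$ with a finite surjective morphism to an abelian variety $A$, one has $\kappa(W)\geq 0$, there is a finite \'etale cover $\widetilde W\to W$ with $\widetilde W\cong Y\times B$, $B$ abelian, $Y$ normal of general type with $\dim Y=\kappa(W)$, and $\kappa(W)=0$ forces $W$ to be abelian with $W\to A$ \'etale. With this, the disconnected case splits correctly: if $\kappa(W)=0$, then $W$ is abelian and $\beta$ factors through $\alpha$ by the universal property, giving a section of $\gamma$ and hence $\deg\gamma=1$, i.e.\ case $1$ (so this subcase in fact cannot coexist with $\deg\gamma\geq 2$); if $\kappa(W)>0$, set $\bar X:=X\times_W\widetilde W$, which is \'etale over $X$ (this, not $X\times_A A'$, is the right cover), and observe that $\bar X\to\widetilde W\to Y$ has connected fibres because $\bar X\to\widetilde W$ does and the fibres $\{y\}\times B$ are connected; this is exactly the fibre space onto a positive-dimensional general type variety required for case $2$. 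The paper's proof is precisely Kawamata's argument for \cite[Theorem 1]{Kaw81} run with this theorem in place of the Kodaira-dimension-zero hypothesis, so without citing or reproving that structural result your argument has a genuine gap rather than a routine verification left to the reader.
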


We next prove an easy lemma which will be used in the following.

\begin{lemma}\label{EA}
Let $(X,\Delta)$ be a log canonical pair, and $f: X \to Z$ an algebraic fibre space. Assume $K_X+\Delta$ is nef and $Z$ is of general type. If for a sufficiently general fibre $X_{z}$, we have $\kappa(K_{X_{z}}+\Delta|_{X_z})=\nu(K_{X_{z}}+\Delta|_{X_z})$, then $\kappa(K_{X}+\Delta)=\nu(K_{X}+\Delta)$.
\end{lemma}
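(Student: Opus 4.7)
The plan is to sandwich $\nu(K_X+\Delta)$ between two quantities that both equal $\kappa(K_X+\Delta)$, combining Fujino's addition formula (Theorem \ref{AFS3}) on one side with the general inequality $\kappa\le\nu$ on the other, together with the ``easy addition'' inequality for the numerical Kodaira dimension.

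First I would apply Theorem \ref{AFS3} to $f:X\to Z$: since $Z$ is of general type,
\[
\kappa(K_X+\Delta)=\kappa(K_{X_z}+\Delta|_{X_z})+\dim Z,
\]
which by the hypothesis $\kappa(K_{X_z}+\Delta|_{X_z})=\nu(K_{X_z}+\Delta|_{X_z})$ becomes
\[
\kappa(K_X+\Delta)=\nu(K_{X_z}+\Delta|_{X_z})+\dim Z.
\]
Since $K_X+\Delta$ is nef, the universal inequality $\kappa(K_X+\Delta)\le\nu(K_X+\Delta)$ then yields the lower bound
\[
\nu(K_X+\Delta)\;\ge\;\nu(K_{X_z}+\Delta|_{X_z})+\dim Z.
\]

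For the matching upper bound I would invoke the easy-addition inequality for the numerical Kodaira dimension of a nef divisor along a surjective morphism, namely $\nu(D)\le\nu(D|_{X_z})+\dim Z$; this is Nakayama's easy-addition for the $\sigma$-invariant specialized to the nef case, where $\kappa_\sigma$ coincides with $\nu$. Applied to $D=K_X+\Delta$, using the adjunction identity $(K_X+\Delta)|_{X_z}\sim_{\mathbb{Q}}K_{X_z}+\Delta|_{X_z}$ on the smooth general fibre, this gives $\nu(K_X+\Delta)\le\nu(K_{X_z}+\Delta|_{X_z})+\dim Z$. The sandwich forces equalities throughout, and in particular $\nu(K_X+\Delta)=\kappa(K_X+\Delta)$.

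The only step requiring work beyond direct citation is the easy-addition inequality for $\nu$: either I cite it from Nakayama's book, or I argue directly by noting that $(K_X+\Delta)^{d+1}$ becomes numerically trivial on a general fibre (where $d=\nu(K_{X_z}+\Delta|_{X_z})$), so that further intersections with cycle classes pulled back from the $(\dim Z)$-dimensional base can contribute at most $\dim Z$ additional factors before the product is forced to vanish. This is the one place where care is needed, but it is purely intersection-theoretic and does not interact with the singularities of the pair.
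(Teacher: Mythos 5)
Your proposal is correct and follows essentially the same route as the paper: both combine Nakayama's easy-addition inequality $\nu(K_X+\Delta)\le\nu(K_{X_z}+\Delta|_{X_z})+\dim Z$ for the nef divisor, Fujino's addition formula (Theorem \ref{AFS3}) using that $Z$ is of general type, and the inequality $\kappa\le\nu$ to close the sandwich. The only difference is cosmetic bookkeeping in how the chain of inequalities is arranged.
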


\begin{proof}
We have an easy addition formula for numerical Kodaira dimension for nef divisors.
\begin{center}
$\nu(K_{X}+\Delta)\leq\nu(K_{X_{z}}+\Delta|_{X_z})+\rm dim$\,$Z$
\end{center}
The proof of it can be found in \cite[Lemma 2.3]{Nak04} when $X$ is smooth. The proof is verbatim when $X$ is normal. By assumption we have $\kappa(K_{X_{z}}+\Delta|_{X_z})=\nu(K_{X_{z}}+\Delta|_{X_z})$ for a sufficiently general fibre $X_{z}$. By Theorem \ref{AFS3}, we have $\kappa(K_{X}+\Delta)=\kappa(K_{X_{z}}+\Delta|_{X_z})+\rm dim$\,$Z$. Thus
\begin{center}
$\nu(K_{X}+\Delta)\leq\kappa(K_{X_{z}}+\Delta|_{X_z})+\rm dim$\,$Z=\kappa(K_{X}+\Delta)\leq\nu(K_{X}+\Delta)$.
\end{center}
We have $\kappa(K_{X}+\Delta)=\nu(K_{X}+\Delta)$.
\end{proof}

\begin{thm}\label{Lq}
Assume the existence of good minimal models for klt pairs in dimensions up to $n-1$, the log abundance conjecture in dimension $n-1$ and the termination with scaling for $\mathbb{Q}$-factorial dlt pairs in dimension $n$. Let $(X,\Delta)$ be a log canonical pair of dimension $n$ with $q(X)>0$. If $K_X+\Delta$ is nef, then $K_X+\Delta$ is semi-ample.
\end{thm}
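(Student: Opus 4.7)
The approach is to combine the reduction strategy from the proof of Theorem~\ref{Uni} with the Albanese-type structure forced by $q(X)>0$. First I would perform a dlt blow-up (Theorem~\ref{dlt}) and apply the reduction Lemmas~\ref{1},~\ref{2}, and~\ref{3} exactly as in Theorem~\ref{Uni}, reducing the statement to the case where $(X,\Delta)$ is a $\mathbb{Q}$-factorial klt pair of dimension $n$ with $K_X+\Delta$ nef. The hypothesis $q(X)>0$ is preserved along these birational modifications because $q$ is a birational invariant.

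At this point I would invoke Lemma~\ref{Ab}, which splits the problem into two sub-cases. In the first, the Albanese morphism $\alpha\colon X\to\mathrm{Alb}(X)$ is an algebraic fibre space onto an abelian variety of dimension $q(X)\geq 1$. Since an abelian variety is trivially of maximal Albanese dimension, a further dichotomy on the relative bigness of $K_X+\Delta$ over $\mathrm{Alb}(X)$ settles the sub-case: Theorem~\ref{AFS2} applies when $K_X+\Delta$ is big over $\mathrm{Alb}(X)$, and Proposition~\ref{AFS1} applies when it is not; in either situation $K_X+\Delta$ is semi-ample.

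In the second sub-case of Lemma~\ref{Ab}, there is an \'etale cover $\pi\colon\bar X\to X$ together with an algebraic fibre space $g\colon\bar X\to W$ with $W$ a normal projective variety of general type and $\dim W>0$. Setting $\bar\Delta:=\pi^{*}\Delta$, the pair $(\bar X,\bar\Delta)$ is $\mathbb{Q}$-factorial klt and $K_{\bar X}+\bar\Delta=\pi^{*}(K_X+\Delta)$ is nef. If $K_{\bar X}+\bar\Delta$ is not big over $W$, Proposition~\ref{AFS1} applied to $g$ directly yields semi-ampleness of $K_{\bar X}+\bar\Delta$. If $K_{\bar X}+\bar\Delta$ is big over $W$, then $K_{\bar X_w}+\bar\Delta|_{\bar X_w}$ is big on the general fibre, and Fujino's addition formula (Theorem~\ref{AFS3}), which applies precisely because $W$ is of general type, gives $\kappa(K_{\bar X}+\bar\Delta)=(n-\dim W)+\dim W=n$. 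Thus $K_{\bar X}+\bar\Delta$ is nef and big on the $\mathbb{Q}$-factorial klt pair $(\bar X,\bar\Delta)$, hence semi-ample by the basepoint-free theorem for klt pairs. In either situation, semi-ampleness descends along the finite surjective morphism $\pi$ to give semi-ampleness of $K_X+\Delta$ on $X$.

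The delicate step I expect to be the main obstacle is the second sub-case of Lemma~\ref{Ab} when $K_{\bar X}+\bar\Delta$ is big over $W$: because $W$ need not be of maximal Albanese dimension, Theorem~\ref{AFS2} cannot be applied directly to the fibration $g\colon\bar X\to W$, and Fujino's addition formula is the essential tool for promoting relative bigness over $W$ to global bigness on $\bar X$ before the basepoint-free theorem can finish the argument.
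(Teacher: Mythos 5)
Your reduction step and your treatment of the first case of Lemma~\ref{Ab} coincide with the paper's proof: dlt blow-up plus Lemmas~\ref{1}--\ref{3} to reach the $\mathbb{Q}$-factorial klt case (the paper goes one step further to terminal, but terminality plays no role here), then the dichotomy over $\mathrm{Alb}(X)$ handled by Theorem~\ref{AFS2} and Proposition~\ref{AFS1}. In the second case of Lemma~\ref{Ab} you genuinely diverge: the paper does not split according to relative bigness over $W$; instead it applies klt abundance in dimension $\le n-1$ (from the good minimal model hypothesis via [GL13]) to a sufficiently general fibre $\bar X_w$, feeds $\kappa=\nu$ on the fibre into Lemma~\ref{EA} (whose engine is the same Theorem~\ref{AFS3} you use) to get $\kappa(K_{\bar X}+\bar\Delta)=\nu(K_{\bar X}+\bar\Delta)$, and then concludes by the fact that a nef and abundant klt log canonical divisor is semi-ample. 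Your route is attractive in the big-over-$W$ subcase, where Fujino's addition formula plus the basepoint-free theorem avoids both fibrewise abundance and the nef-and-abundant criterion; in the non-big subcase you instead lean on Proposition~\ref{AFS1}, which is a legitimate use of the same hypotheses.

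There is, however, one unjustified step: you assert that $(\bar X,\bar\Delta)$ is $\mathbb{Q}$-factorial because $X$ is. $\mathbb{Q}$-factoriality does not in general ascend along finite \'etale covers (a Weil divisor on $\bar X$ need not be pulled back from $X$, and only its trace under the Galois group is controlled by the $\mathbb{Q}$-factoriality of $X$), and Proposition~\ref{AFS1} as stated requires the $\mathbb{Q}$-factorial hypothesis, so your non-big-over-$W$ subcase has a gap as written. Note that this is precisely the subcase where the paper's argument needs no $\mathbb{Q}$-factoriality upstairs, since it only uses Lemma~\ref{EA} and the abundant-divisor criterion. The gap is reparable: replace $\bar X$ by a small $\mathbb{Q}$-factorialization $\bar X'\to\bar X$ of the klt pair $(\bar X,\bar\Delta)$ (which exists by the same result of Birkar--Cascini--Hacon--M\textsuperscript{c}Kernan underlying Theorem~\ref{TerM}); the pulled-back pair is still klt with nef log canonical divisor, the composition $\bar X'\to W$ is still an algebraic fibre space, relative non-bigness over $W$ is preserved, and semi-ampleness descends first along the small birational morphism and then along the finite cover $\pi$. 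With that modification (or by reverting to the paper's $\kappa=\nu$ argument in this subcase) your proof is complete.
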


\begin{proof}
As in the proof of Theorem \ref{Uni}, we can assume $(X,\Delta)$ is terminal and $X$ is $\mathbb{Q}$-factorial.

We use Lemma \ref{Ab}. Let us assume the case $1$ of the lemma is true, so the Albanese morphism of $X$ is an algebraic fibre space. If $K_X+\Delta$ is relatively big over $\rm Alb$$(X)$, then we use Theorem \ref{AFS2}. We deduce that $(X,\Delta)$ has a good minimal model and thus $K_X+\Delta$ is semi-ample. If $K_X+\Delta$ is not relatively big over $\rm Alb$$(X)$, then we use Proposition \ref{AFS1}. Since $q(X)>0$ and $X$ has rational singularities, we have $\rm dim$\,$\rm Alb$$(X)\geq1$. Thus Proposition \ref{AFS1} implies that $K_X+\Delta$ is semi-ample.

Assume now $X$ has an $\rm \acute{e}$tale cover $f: \bar{X} \to X$ such that there exists an algebraic fibre space structure $g: \bar{X} \to W$ with $\rm dim$\,$W>0$ and $W$ of general type. We pull back everything from $X$ to $\bar{X}$ and get a new pair $(\bar{X}, \bar{\Delta})$ such that $K_{\bar{X}}+\bar{\Delta}\sim_{\mathbb{Q}}f^*(K_X+\Delta)$. Here $\bar{X}$ is still a normal projective variety and $(\bar{X}, \bar{\Delta})$ is klt by \cite[Proposition 5.20]{KM98}. If $\bar{X}_{w}$ is a sufficiently general fibre of $g$, then $\rm dim$\,$\bar{X}_{w}\leq n-1$ since $\rm dim$\,$W>0$. For a sufficiently general $\bar{X}_{w}$, the pair $(\bar{X}_{w}, \bar{\Delta}|_{\bar{X}_w})$ is klt and $K_{\bar{X}_{w}}+\bar{\Delta}|_{\bar{X}_w}$ is nef. The existence of good minimal models for klt pairs in dimensions up to $n-1$ implies the log abundance conjecture for klt pairs in dimensions up to $n-1$ by \cite[Theorem 4.3]{GL13} and the existence of terminal models. We know that $K_{\bar{X}_{w}}+\bar{\Delta}|_{\bar{X}_w}$ is semi-ample and especially $\kappa(K_{\bar{X}_w}+\bar\Delta|_{\bar{X}_w})=\nu(K_{\bar{X}_w}+\bar\Delta|_{\bar{X}_w})$. By Lemma \ref{EA}, we have that $\kappa(K_{\bar{X}}+\bar{\Delta})=\nu(K_{\bar{X}}+\bar{\Delta})$ and $K_{\bar{X}}+\bar{\Delta}$ is semi-ample. Thus $K_X+\Delta$ is semi-ample.
\end{proof}

Since the assumptions in Theorem \ref{Lq} are true when $n=4$ as before, we have the following:

\begin{coro}\label{Lq4}
Let $(X,\Delta)$ be a log canonical pair such that $X$ is of dimension $4$ and $q(X)>0$. If $K_X+\Delta$ is nef, then $K_X+\Delta$ is semi-ample.
\end{coro}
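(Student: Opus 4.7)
The plan is to deduce Corollary \ref{Lq4} directly from Theorem \ref{Lq} specialized to $n = 4$. No new argument is required; the task is simply to verify that each of the three hypotheses of Theorem \ref{Lq} is unconditionally available in dimension $4$.

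First I would check the three assumptions one by one. (i) The existence of good minimal models for klt pairs in dimensions up to $n-1 = 3$ follows from the classical LMMP in low dimensions, as assembled from works of Kawamata, Shokurov, Mori, Kollár, and Keel--Matsuki--$\mathrm{M^{c}}$Kernan. (ii) The log abundance conjecture in dimension $n-1 = 3$ is exactly the theorem of Keel, Matsuki, and $\mathrm{M^{c}}$Kernan recalled in the introduction \cite{KMM94}. (iii) Termination with scaling for $\mathbb{Q}$-factorial dlt pairs in dimension $n = 4$ is precisely Theorem \ref{TerS}, which is Birkar's \cite[Lemma 3.8]{Bir10}.

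Having verified these, I would apply Theorem \ref{Lq} to the given pair $(X,\Delta)$: since $X$ has dimension $4$, $q(X) > 0$, and $K_X+\Delta$ is nef, the theorem yields that $K_X+\Delta$ is semi-ample, which is exactly the conclusion of the corollary.

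There is no genuine obstacle here; the corollary is a clean unconditional consequence of Theorem \ref{Lq}, and all the substantive work lies in that theorem (reduction to the terminal $\mathbb{Q}$-factorial case via Lemmas \ref{1}--\ref{3}, the dichotomy provided by Lemma \ref{Ab}, and the invocations of Proposition \ref{AFS1} and Theorem \ref{AFS2} in each branch). Thus the proof of Corollary \ref{Lq4} should consist only of the citation of Theorem \ref{Lq} together with a brief remark recording the three inputs above.
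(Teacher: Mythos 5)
Your proposal is correct and matches the paper's own argument: the corollary is deduced immediately from Theorem \ref{Lq} with $n=4$, after noting that the three hypotheses hold unconditionally in that dimension (good minimal models for klt pairs up to dimension $3$, log abundance in dimension $3$, and termination with scaling via Theorem \ref{TerS}). Nothing further is needed.
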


\begin{coro}
Let $(X,\Delta)$ be a log canonical pair such that $X$ is of dimension $4$ and $q(X)>0$. If $K_X+\Delta$ is pseudo-effective, then $(X,\Delta)$ has a good minimal model.
\end{coro}

\begin{proof}
By Theorem \ref{Ter}, we can run a LMMP for $(X,\Delta)$ and it will terminate at a minimal model as in the proof of Corollary \ref{Uni4}. This minimal model still has positive irregularity and we can apply Corollary \ref{Lq4}.
\end{proof}
	
	\bibliographystyle{amsalpha}
	\bibliography{biblio}

\providecommand{\bysame}{\leavevmode\hbox to3em{\hrulefill}\thinspace}
\providecommand{\MR}{\relax\ifhmode\unskip\space\fi MR }
\providecommand{\MRhref}[2]{%
  \href{http://www.ams.org/mathscinet-getitem?mr=#1}{#2}
}
\providecommand{\href}[2]{#2}
\begin{thebibliography}{BCHM10}

\bibitem[BC15]{BC15}
C.~Birkar and J.~A. Chen, \emph{Varieties fibred over abelian varieties with
  fibres of log general type}, Adv. Math. \textbf{270} (2015), 206--222.

\bibitem[BCHM10]{BCHM}
C.~Birkar, P.~Cascini, C.~D. Hacon, and J.~M\textsuperscript{c}Kernan,
  \emph{Existence of minimal models for varieties of log general type}, J.
  Amer. Math. Soc. \textbf{23} (2010), no.~2, 405--468.

\bibitem[Bir10]{Bir10}
C.~Birkar, \emph{On existence of log minimal models}, Compos. Math.
  \textbf{146} (2010), no.~4, 919--928.

\bibitem[DHP13]{DHP}
J.-P. Demailly, C.~D. Hacon, and M.~P{\u{a}}un, \emph{Extension theorems,
  non-vanishing and the existence of good minimal models}, Acta Math.
  \textbf{210} (2013), no.~2, 203--259.

\bibitem[FG14]{FG14}
O.~Fujino and Y.~Gongyo, \emph{Log pluricanonical representations and the
  abundance conjecture}, Compos. Math. \textbf{150} (2014), no.~4, 593--620.

\bibitem[Fuj84]{Fuj84}
T.~Fujita, \emph{Fractionally logarithmic canonical rings of algebraic
  surfaces}, J. Fac. Sci. Univ. Tokyo Sect. IA Math. \textbf{30} (1984), no.~3,
  685--696.

\bibitem[Fuj10]{Fuj10}
O.~Fujino, \emph{Finite generation of the log canonical ring in dimension
  four}, Kyoto J. Math. \textbf{50} (2010), no.~4, 671--684.

\bibitem[Fuj11]{Fuj11}
\bysame, \emph{Fundamental theorems for the log minimal model program}, Publ.
  Res. Inst. Math. Sci. \textbf{47} (2011), no.~3, 727--789.

\bibitem[Fuj17]{Fuj17}
\bysame, \emph{Notes on the weak positivity theorems}, Algebraic varieties and
  automorphism groups, Adv. Stud. Pure Math., vol.~75, Math. Soc. Japan, Tokyo,
  2017, pp.~73--118.

\bibitem[GL13]{GL13}
Y.~Gongyo and B.~Lehmann, \emph{Reduction maps and minimal model theory},
  Compos. Math. \textbf{149} (2013), no.~2, 295--308.

\bibitem[Hu16]{Hu16}
Z.~Hu, \emph{Log canonical pairs over varieties with maximal {A}lbanese
  dimension}, Pure Appl. Math. Q. \textbf{12} (2016), no.~4, 543--571.

\bibitem[Kaw79]{Kaw79}
Y.~Kawamata, \emph{On the classification of noncomplete algebraic surfaces},
  Algebraic geometry ({P}roc. {S}ummer {M}eeting, {U}niv. {C}openhagen,
  {C}openhagen, 1978), Lecture Notes in Math., vol. 732, Springer, Berlin,
  1979, pp.~215--232.

\bibitem[Kaw81]{Kaw81}
\bysame, \emph{Characterization of abelian varieties}, Compositio Math.
  \textbf{43} (1981), no.~2, 253--276.

\bibitem[KK10]{KK10}
J.~Koll\'{a}r and S.~J. Kov\'{a}cs, \emph{Log canonical singularities are {D}u
  {B}ois}, J. Amer. Math. Soc. \textbf{23} (2010), no.~3, 791--813.

\bibitem[KM98]{KM98}
J.~Koll{\'a}r and S.~Mori, \emph{Birational geometry of algebraic varieties},
  Cambridge Tracts in Mathematics, vol. 134, Cambridge University Press,
  Cambridge, 1998.

\bibitem[KMM94]{KMM94}
S.~Keel, K.~Matsuki, and J.~M\textsuperscript{c}Kernan, \emph{Log abundance
  theorem for threefolds}, Duke Math.\ J. \textbf{75} (1994), 99--119.

\bibitem[Kol13]{Kol13}
J.~Koll{\'a}r, \emph{Singularities of the minimal model program}, Cambridge
  Tracts in Mathematics, vol. 200, Cambridge University Press, Cambridge, 2013,
  With a collaboration of S{\'a}ndor Kov{\'a}cs.

\bibitem[Lan83]{Lan83}
S.~Lang, \emph{Abelian varieties}, Springer-Verlag, New York-Berlin, 1983.

\bibitem[LP18]{LP18}
V.~Lazi\'c and Th. Peternell, \emph{Abundance for varieties with many
  differential forms}, \'Epijournal Geom. Alg\'ebrique \textbf{2} (2018),
  Article 1.

\bibitem[Mor18]{Mor18}
J.~Moraga, \emph{{Termination of pseudo-effective 4-fold flips}},
  arXiv:1802.10202\setbox0=\hbox{2018}.

\bibitem[Nak04]{Nak04}
N.~Nakayama, \emph{Zariski-decomposition and abundance}, MSJ Memoirs, vol.~14,
  Mathematical Society of Japan, Tokyo, 2004.

\bibitem[Wan16]{Wan16}
Y.~Wang, \emph{{On the characterization of abelian varieties for log pairs in
  zero and positive characteristic}}, arXiv:1610.05630\setbox0=\hbox{2016}.

\end{thebibliography}

\end{document}